\theoremstyle{definition}
\newtheorem{theorem}{Theorem}
\newtheorem{corollary}[theorem]{Corollary}
\newtheorem{proposition}[theorem]{Proposition}
\newtheorem{lemma}[theorem]{Lemma}
\newtheorem{definition}[theorem]{Definition}
\newtheorem{example}[theorem]{Example}
\newtheorem{notation}[theorem]{Notation}
\newtheorem{remark}[theorem]{Remark}
\newcommand{\numberset}{\mathbb}
\newcommand{\fq}{{\mathbb{ F}}_q}
\newcommand{\N}{\numberset{N}}
\newcommand{\Q}{\numberset{Q}}
\newcommand{\C}{\numberset{C}}
\newcommand{\F}{\numberset{F}}
\newcommand{\mC}{\mathcal{C}}
\newcommand{\mE}{\mathcal{E}}
\newcommand{\mA}{\mathcal{A}}
\newcommand{\cC}{\mathcal{C}}
\newcommand{\mD}{\mathcal{D}}
\newcommand\qbin[3]{\left[\begin{matrix} #1 \\ #2 \end{matrix} \right]_{#3}}
\newcommand{\tr}{{\rm Tr}\,}
\newcommand{\rk}{{\rm rk}}
\def\ff{{\mathbb F}}
\newlength{\dhatheight}
\newcommand{\ma}{\F_q^{k \times m}}
\begin{document}

\title{\textbf{Covering Radius of Matrix Codes Endowed with the Rank
Metric}\vspace{2em}}

%
%
%
%
%

\author{
  Eimear Byrne \\
  \textit{School of Mathematics and Statistics} \\ 
  \textit{University College Dublin,
Ireland}
\\ \texttt{ebyrne@ucd.ie}
  \and
   Alberto Ravagnani \\
  \textit{Institut de Math\'{e}matiques} \\ 
  \textit{Universit\'{e} de Neuch\^{a}tel,
Switzerland}
\\ \texttt{alberto.ravagnani@unine.ch}
}

\date{}

\maketitle

\begin{abstract}
In this paper we study properties and invariants of matrix codes endowed with
the rank metric,
and relate them to the covering radius. We introduce new tools for the 
analysis of rank-metric codes, such as puncturing and shortening constructions.
We give upper bounds on the covering radius of a code by applying different
combinatorial methods.
We apply the various bounds to the classes of maximal rank distance and quasi
maximal rank distance codes.
 
\end{abstract}

\vspace{2em}

\section*{Introduction}

Rank-metric codes have featured prominently in the literature on algebraic codes
in recent years and 
especially since their applications to error-correction in networks were
understood. Such codes are subsets of the matrix ring $\fq^{k \times m}$ endowed
with the rank distance function, 
which measures the $\fq$-rank of the difference of a pair of matrices.
An analogue of the Singleton bound was given in \cite{D78}. If a code meets 
this bound it is referred to as a {\em maximum rank distance} (MRD) code.
It is known that there exist codes meeting this bound for all values 
of $q,k,m,d$ \cite{D78,gabid,john}.
For this reason the {\em main coding problem} for rank metric codes, unlike 
the same problem for the Hamming metric, is closed: for any $q,k,m,d$ the
optimal size 
of a rank-metric code in $\fq^{k \times m}$ of minimum rank distance $d$ is
known. There are very few classes of
rank-metric codes known, due in part to the Delsarte-Gabidulin family and its
generalizations \cite{D78,gabid,john}, which are optimal and can be efficiently
decoded
\cite{gabid,L06,WAS13}.

The {\em covering radius} of a code is a fundamental parameter. It measures 
the maximum weight of any correctable error in the ambient space. 
It also characterizes the {\em maximality} property of a code, that is,
whether or not the code is contained in another of the same minimum distance.
The covering
radius of a code measures the least integer $r$ such that every element of the
ambient space is within distance $r$ of some codeword. This quantity is
generally much harder to compute than the minimum distance of a code. There are
numerous papers and books on this topic for classical codes with respect to the
Hamming
distance (see \cite{BGP15,coveringcodes,CKMS85,CLLM97,GMR} and the references
therein), 
but relatively little attention has been paid to it for
rank-metric codes \cite{gadu1,gadu2}.    

In this paper we describe properties of rank-metric codes and relate these to
the 
covering radius. We define new parameters and give tools for the analysis of
such codes. In particular, we introduce new definitions for
the puncturing and the shortening of a general rank-metric code.
In many instances our tools are applied to establish
new bounds on the rank-metric covering radius. Some of the derived bounds, such
as the dual distance and external distance bounds, are analogues of known bounds
for the Hamming distance. Others, such as the initial set bound, are unique to
matrix codes. We apply our results to the classes of maximal rank distance and
quasi maximal rank distance codes. 

In Section \ref{sec1} we consider the property of {\em maximality}. A code is
maximal if 
it is not contained in another code of the same minimum distance. We introduce
a
new parameter, called the {\em maximality degree} of a code, and show that it is
determined by minimum distance and covering radius of a code. These results are
independent of the metric. In Section \ref{sec2} we define shortened and
punctured codes rank metric codes and describe their properties. We give a
duality result relating a shortened and punctured code. In Section \ref{sec3}
we investigate translates of a code. We show that the weight enumerator of a
coset of a linear code of rank weight is completely determined by the weights of
first $n-d^\perp$ cosets, and establish this using M\"{o}bius inversion on the
lattice of subspaces of $\fq^k$. This is then applied to get the rank-metric
analogue of the {\em dual distance bound}. We also give the rank-metric
generalization of the {\em external distance bound}, which holds also for
non-linear codes. In Section \ref{sec4} we 
introduce the concept of the {\em initial set} of a matrix code and use this 
to derive a bound on the covering radius of a code. In Section \ref{sec5} we
apply previously derived bounds to maximum rank distance and quasi maximum rank
distance codes.

\section{Preliminaries} \label{sec0}

Throughout this paper, $q$ is a fixed prime power, 
$\F_q$ is the finite field with $q$ elements, and 
$k,m$ are positive integers. We assume $k \le m$ without 
loss of generality, and denote by 
$\F_q^{k \times m}$ the  space of 
$k \times m$ matrices over $\F_q$.
For any positive integer $n$ we set $[n]:=\{ i \in \N
:
1 \le i \le n\}$.

\begin{definition}
The \textbf{rank distance} between matrices 
$M,N \in \ma$ is $d(M,N):=\mbox{rk}(M-N)$.
 A \textbf{rank-metric code} is a non-empty subset
 $\mC \subseteq \ma$. When 
 $|\mC| \ge 2$, the \textbf{minimum rank 
distance} of 
 $\mC$ is the integer defined by 
 $d(\mC):= \min\{d(M,N) : M,N \in \mC, \ M \neq N\}$.
 The \textbf{weight} and \textbf{distance}
 \textbf{distribution} of a code $\mC \subseteq \ma$ are the integer vectors 
 $W(\mC)=(W_i(\mC) : 0 \le i \le k)$ and $B(\mC)=(B_i(\mC) : 0 \le i \le k)$,
 where, for all $i \in \{0,...,k\}$, 
 $$W_i(\mC):=|\{M \in \mC : \mbox{rk}(M)=i\}|, \ \ \ \ \ \ \ 
 B_i(\mC):= 1/|\mC| \cdot |\{(M,N) \in \mC \times \mC : d(M,N)=i\}|.$$
  \end{definition}

It is easy to see that $d$ defines a distance function on
$\ma$. 

\begin{definition}
A code $\mC \subseteq \ma$ is \textbf{linear} if it is an 
 $\F_q$-subspace of $\ma$. If this is the case, then 
 the \textbf{dual code} of $\mC$ is the linear code
 $\mC^\perp:= \{N \in \ma : \mbox{Tr}(MN^t)=0 \mbox{ for all }
 M \in \mC\} \subseteq \ma$.
 \end{definition}

 If $\mC \subseteq \ma$ is a linear code then one can easily check that 
 $d(\mC)= \min\{\mbox{rk}(M) : M \in \mC, \ M \neq 0\}$ and  
 $W_i(\mC)=B_i(\mC)$ for all $i \in \{0,...,k\}$.
 Moreover, since the map $(M,N) \mapsto \mbox{Tr}(MN^t)$ defines an inner
product
  on the space $\ma$, 
 we have $\dim(\mC^\perp)=km-\dim(\mC)$ and $\mC^{\perp\perp}=\mC$.

 \begin{definition}
  The \textbf{covering radius} of a code $\mC \subseteq \F_q^{k \times m}$ is the integer
$$\rho(\mC):= \min\{i : \mbox{for all 
$X \in \ma$ there exists $M \in \mC$ with $d(X,M) \le i$}\}$$
 \end{definition}

 In words, the covering radius 
 of a code $\mC$ is the maximum distance of $\mC$ to any matrix in the
ambient space, or the minimum value $r$ such that the 
 	union of the spheres of radius $r$ about each codeword cover the
 ambient
space.
 The following result summarizes some simple properties of this 
 invariant. 
 These facts are known from studies of the Hamming distance
covering radius and, being actually independent of the metric used, hold also in
the rank metric case.
 For a comprehensive treatment of the covering problem for Hamming metric codes,
see \cite{CKMS85,CLLM97}.
 
 \begin{lemma} \label{rd}
  Let $\mC \subseteq \ma$ be a code. The following hold.
  \begin{enumerate}
   \item $0 \le \rho(\mC) \le k$. Moreover, $\rho(\mC)=0$
if and only if $\mC=\ma$.
\item If $\mD \subseteq \ma$ is a code with 
$\mC \subseteq \mD$, then $\rho(\mC) \ge \rho(\mD)$.
\item \label{p3} If $\mD \subseteq \ma$ is a code with 
$\mC \subsetneq \mD$, then $\rho(\mC) \ge d(\mD)$.
\item \label{p4} $d(\mC)-1 < 2\rho(\mC)$, {if $|\mC| \ge 2$ and $\mC
\subsetneq \ma$.}
  \end{enumerate}
 \end{lemma}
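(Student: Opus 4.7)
The plan is to prove the four items in order, using only the definition of $\rho(\mC)$ and elementary metric-inequality arguments, consistent with the authors' observation that these facts are metric-independent.

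Items (1) and (2) are essentially unpacking the definition. For (1), every matrix in $\ma$ has rank at most $\min(k,m) = k$, so for any fixed codeword $M$ and any $X \in \ma$ we have $d(X,M) \le k$, which forces $\rho(\mC) \le k$; non-negativity is by definition, and $\rho(\mC) = 0$ is equivalent to every $X \in \ma$ satisfying $d(X,M)=0$ for some $M \in \mC$, i.e., $\mC = \ma$. For (2), the balls of rank-radius $\rho(\mC)$ centered at codewords of $\mC$ form a subfamily of those centered at codewords of $\mD$, so the latter family also covers $\ma$, yielding $\rho(\mD) \le \rho(\mC)$.

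For (3), I would pick any $X \in \mD \setminus \mC$, which exists by the strict containment. For every $M \in \mC \subseteq \mD$, the inequality $M \ne X$ and the fact that both lie in $\mD$ give $d(X,M) \ge d(\mD)$, so $d(X,\mC) \ge d(\mD)$ and therefore $\rho(\mC) \ge d(X,\mC) \ge d(\mD)$.

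For (4), the strategy is to exhibit a matrix $X$ lying approximately midway between two nearest codewords. Choose $M_1, M_2 \in \mC$ with $d(M_1,M_2) = d := d(\mC)$, and write $M_2 - M_1 = U\Sigma V$ with $U \in GL_k(\F_q)$, $V \in GL_m(\F_q)$ and $\Sigma$ having exactly $d$ ones on its diagonal and zeros elsewhere. Let $\Sigma'$ be obtained from $\Sigma$ by zeroing out the last $\lfloor d/2 \rfloor$ ones, so $\rk(\Sigma') = \lceil d/2 \rceil$ and $\rk(\Sigma - \Sigma') = \lfloor d/2 \rfloor$, and set $X := M_1 + U\Sigma' V$. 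Invertibility of $U,V$ then yields $d(X,M_1) = \lceil d/2 \rceil$ and $d(X,M_2) = \lfloor d/2 \rfloor$. For any other codeword $M \in \mC$, the reverse triangle inequality gives
$$d(X,M) \ge d(M,M_1) - d(X,M_1) \ge d - \lceil d/2 \rceil = \lfloor d/2 \rfloor,$$
so $d(X,\mC) \ge \lfloor d/2 \rfloor$ and $2\rho(\mC) \ge 2\lfloor d/2 \rfloor \ge d-1$. The main obstacle here is the careful rank computation for $X - M_1$ and $X - M_2$, which relies crucially on $U$ and $V$ being invertible; the hypothesis $|\mC| \ge 2$ is what allows us to pick distinct $M_1, M_2$, while $\mC \subsetneq \ma$ excludes the degenerate case $d(\mC)=1, \rho(\mC)=0$ in which equality would otherwise arise.
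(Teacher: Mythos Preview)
Your arguments for items (1)--(3) are correct and essentially coincide with the paper's (the paper only spells out (3), in the same way you do).

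For item (4) there is a genuine gap. Your midpoint construction shows $d(X,\mC)=\lfloor d/2\rfloor$, hence $\rho(\mC)\ge\lfloor d/2\rfloor$ and $2\rho(\mC)\ge 2\lfloor d/2\rfloor$. When $d$ is even this gives $2\rho(\mC)\ge d>d-1$, but when $d$ is odd you only obtain $2\rho(\mC)\ge d-1$, not the strict inequality. Your closing remark that ``$\mC\subsetneq\ma$ excludes the degenerate case $d(\mC)=1,\ \rho(\mC)=0$ in which equality would otherwise arise'' is precisely the unjustified step: for any odd $d\ge 3$, equality $\rho(\mC)=(d-1)/2$ would mean that the packing radius equals the covering radius, i.e., that $\mC$ is a \emph{perfect} rank-metric code. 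Ruling this out is a nontrivial result, not a consequence of $\mC\subsetneq\ma$.

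The paper's proof of (4) proceeds differently and handles exactly this point: it observes that the packing radius $\lfloor(d(\mC)-1)/2\rfloor$ never exceeds $\rho(\mC)$, with equality if and only if $\mC$ is perfect, and then invokes the theorem (Chen, 1996) that there are no perfect codes in the rank metric. To repair your argument you would likewise need to cite this non-existence result for the odd-$d$ case; your explicit midpoint construction is a pleasant alternative for even $d$ but does not by itself suffice.
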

 \begin{proof}
  To see that \ref{p3} holds, let $N \in \mD \setminus \mC$. By definition of
covering radius,
  there exists a matrix  
  $M \in \mC$ with $d(M,N) \le \rho(\mC)$. Thus $d(\mD) \le d(M,N) \le \rho(\mC)$.
  
  To see \ref{p4}, observe that the packing radius $\lfloor (d(\mC)-1)/2
\rfloor$ of $\cC$ cannot exceed the covering radius,
  and that equality occurs if and only if $\cC$ is perfect, in which case we have 
 $\lfloor (d(\mC)-1)/2 \rfloor = \rho(\mC)$. However there are no perfect
codes for the rank metric \cite{C96}.   
  \end{proof}

\section{Maximality} \label{sec1}

In this short section we investigate some connections between the covering
radius 
of a rank-metric code and the property of maximality.
Recall that a code  $\mC \subseteq \F_q^{k \times m}$  is  \textbf{maximal} if
$|\mC|=1$ or
  $|\mC| \ge 2$ and 
 there is no code $\mD \subseteq \ma$ with 
 $\mD \varsupsetneq \mC$ and $d(\mD)=d(\mC)$.
In particular, $\F_q^{k \times m}$ is maximal.

\begin{proposition}[see e.g. \cite{CKMS85}] \label{max}
 A code $\mC  \subseteq \F_q^{k \times m}$ with 
 $|\mC| \ge 2$ is 
 maximal if and only if $\rho(\mC)
\le d(\mC)-1$.
\end{proposition}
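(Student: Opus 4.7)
The plan is to prove both implications by contradiction, leveraging Lemma \ref{rd}(\ref{p3}) for one direction and a direct construction of an enlarging element for the other. Since the statement and both ingredients are metric-agnostic, the argument mirrors the classical Hamming-metric proof.

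For the implication $\rho(\mC) \le d(\mC)-1 \Rightarrow \mC$ is maximal, I would argue by contradiction: suppose there exists $\mD \subseteq \ma$ with $\mC \subsetneq \mD$ and $d(\mD)=d(\mC)$. Then by Lemma \ref{rd}(\ref{p3}) applied to the strict inclusion $\mC \subsetneq \mD$, we get $\rho(\mC) \ge d(\mD) = d(\mC)$, which contradicts the hypothesis $\rho(\mC) \le d(\mC)-1$. This direction is essentially a one-liner.

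For the converse, assume $\mC$ is maximal and suppose toward contradiction that $\rho(\mC) \ge d(\mC)$. Unwinding the definition of covering radius, I would extract a matrix $X \in \ma$ realizing the covering radius, i.e.\ a matrix $X$ such that $d(X,M) \ge \rho(\mC) \ge d(\mC)$ for every $M \in \mC$ (such an $X$ exists because otherwise the covering radius would be strictly smaller). In particular $X \notin \mC$, so the code $\mD := \mC \cup \{X\}$ strictly contains $\mC$. I would then check that $d(\mD) = d(\mC)$: distances within $\mC$ are at least $d(\mC)$ by definition, and distances $d(X,M)$ for $M \in \mC$ are at least $d(\mC)$ by the choice of $X$. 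This contradicts maximality of $\mC$.

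The only nontrivial subtlety is justifying the existence of the matrix $X$ with $d(X,\mC) = \rho(\mC)$ in the second direction; this is where the ``$\min$'' in the definition of $\rho(\mC)$ is used, since if every $X$ satisfied $d(X,\mC) < \rho(\mC)$ then $\rho(\mC)-1$ would already be a valid covering radius, contradicting minimality. Aside from this observation, both directions are immediate, so I do not anticipate a genuine obstacle; the proof is really a direct application of Lemma \ref{rd}(\ref{p3}) together with the standard ``enlarge by a far point'' trick.
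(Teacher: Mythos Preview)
Your proposal is correct and follows essentially the same argument as the paper: the $(\Leftarrow)$ direction via Lemma~\ref{rd}(\ref{p3}), and the $(\Rightarrow)$ direction by adjoining a point $X$ at distance $\rho(\mC)$ from $\mC$ to reach a contradiction. The paper additionally singles out the trivial case $\mC=\ma$ before assuming $\rho(\mC)\ge d(\mC)$, but your contradiction hypothesis already forces $\rho(\mC)\ge 1$ and hence $X\notin\mC$, so this is a cosmetic difference only.
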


\begin{proof}
If $\mC$ is not maximal, then there exists 
$\mC \subsetneq \mD$ with 
$d(\mD)=d(\mC)$. Lemma \ref{rd} implies
$\rho(\mC) \ge d(\mC)=d(\mD)$, i.e.,
$\rho(\mC)>d(\mC)-1$. This shows $(\Leftarrow)$.
Let us prove $(\Rightarrow)$.
 If $\mC= \F_q^{k \times m}$ then the result is trivial. Therefore we assume
$\mC\varsubsetneq \F_q^{k \times m}$ and $\rho(\mC) \ge d(\mC)$ by contradiction.
By the definition of covering radius there exists $X \in \F_q^{k \times m} \setminus \mC$ such that 
$d(M,X) \ge \rho(\mC)$ for all matrices $M \in \mC$.
Then the code $\mD:=\mC \cup \{X \}$ strictly contains $\mC$ and has $d(\mD)=d(\mC)$.
\end{proof}

We now propose a new natural parameter that measures the maximality of a code,
and show how it relates
to the covering radius.

\begin{definition}
 The \textbf{maximality degree} of a code $\mC \subseteq \ma$ with 
 $|\mC| \ge 2$
is the integer defined by
$$\mu(\mC):= \left\{ 
\begin{array}{cl}
 \min \{ d(\mC)-d(\mD) : \mD \subseteq \ma \mbox{ is a code with } \mD \varsupsetneq \mC\}
 & \mbox{ if $\mC \subsetneq \ma$,} \\
1  & \mbox{ if $\mC=\ma$.}\end{array} \right.\ $$
\end{definition}

 The maximality degree of a code $\mC \subseteq \ma$ with $|\mC| \ge 2$ satisfies 
 $0 \le \mu(\mC) \le d(\mC)-1$. Moreover, it is easy to see that 
$\mu(\mC)>0$ if and only if $\mC$ is maximal. Notice that 
$\mu(\mC)$ can be interpreted as the minimum price (in terms of minimum
distance) that one has to pay in order to enlarge $\mC$ to a bigger code. 
We can derive a precise relation
between the covering radius and the maximality
degree of a code as follows.

\begin{proposition} \label{murhod}
 For any code $\mC \subseteq \F_q^{k \times m}$ with $|\mC| \ge 2$ we have
$\mu(\mC)=d(\mC)-\min \{ \rho(\mC),\ d(\mC)\}$.
In particular, if $\mC$ is maximal then $\mu(\mC)=d(\mC)-\rho(\mC)$.
\end{proposition}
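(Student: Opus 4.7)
The plan is to split into three cases according to whether $\mC$ equals $\ma$, is a proper maximal subcode, or is non-maximal, and in each case to compare $\mu(\mC)$ against $d(\mC) - \min\{\rho(\mC), d(\mC)\}$ using Proposition \ref{max} together with part \ref{p3} of Lemma \ref{rd}.

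First I would dispose of the trivial case $\mC = \ma$: here $d(\mC)=1$ and $\rho(\mC)=0$, so both sides of the claimed identity equal $1$, matching the convention in the definition of $\mu$. Next I would treat the non-maximal case $\mC \subsetneq \ma$ with $\rho(\mC) \ge d(\mC)$ (equivalently, by Proposition \ref{max}, $\mC$ not maximal). Here the right-hand side becomes $d(\mC)-d(\mC)=0$. Since $\mC$ is not maximal, there exists $\mD \supsetneq \mC$ with $d(\mD)=d(\mC)$, so $\mu(\mC) \le 0$; but $\mu(\mC) \ge 0$ because any enlargement $\mD \supsetneq \mC$ still contains two codewords of $\mC$ and hence satisfies $d(\mD) \le d(\mC)$. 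So $\mu(\mC)=0$ as required.

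The substantive case is $\mC \subsetneq \ma$ with $\mC$ maximal, where Proposition \ref{max} gives $\rho(\mC) \le d(\mC)-1$, so $\min\{\rho(\mC),d(\mC)\} = \rho(\mC)$ and we must show $\mu(\mC)=d(\mC)-\rho(\mC)$. For the lower bound $\mu(\mC) \ge d(\mC)-\rho(\mC)$, I invoke Lemma \ref{rd}(\ref{p3}): for any $\mD \supsetneq \mC$ we have $d(\mD) \le \rho(\mC)$, hence $d(\mC)-d(\mD) \ge d(\mC)-\rho(\mC)$; taking the minimum over $\mD$ yields the inequality. For the matching upper bound, I need an explicit extension achieving equality. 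The natural candidate is a \emph{deep hole}: by definition of the covering radius there exists $X \in \ma \setminus \mC$ with $d(X,M) \ge \rho(\mC)$ for every $M \in \mC$, and by minimality of $\rho(\mC)$ we may choose $X$ so that $\min_{M \in \mC} d(X,M) = \rho(\mC)$. Setting $\mD := \mC \cup \{X\}$, one has $d(\mD) = \min\{d(\mC), \rho(\mC)\} = \rho(\mC)$ because maximality forces $\rho(\mC) < d(\mC)$. Hence $d(\mC)-d(\mD)=d(\mC)-\rho(\mC)$, giving $\mu(\mC) \le d(\mC)-\rho(\mC)$.

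I do not anticipate a real obstacle; the only subtle point is verifying the existence of a deep hole at distance exactly $\rho(\mC)$ (rather than merely at distance $\ge \rho(\mC)$), which follows immediately from the fact that $\rho(\mC)$ is defined as a minimum over covering radii and so is attained. Combining all three cases yields the stated formula, and the ``in particular'' clause is then just the specialization to maximal $\mC$ via Proposition \ref{max}.
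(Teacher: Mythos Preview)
Your proposal is correct and follows essentially the same route as the paper's proof: the same case split on maximality, the same deep-hole construction $\mD=\mC\cup\{X\}$ for the upper bound on $\mu(\mC)$, and the same appeal to Lemma~\ref{rd}(\ref{p3}) for the lower bound (the paper phrases the latter as a contradiction rather than a direct inequality, but that is purely cosmetic). The only minor quibble is your justification of the deep hole's existence: it is cleaner to say that $\rho(\mC)=\max_{X\in\ma}d(X,\mC)$ is a maximum over a finite set and hence attained, rather than invoking ``$\rho(\mC)$ is defined as a minimum over covering radii''.
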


\begin{proof}
 If $\mC$ is not a maximal code, then by Proposition \ref{max}
 we have $\mu(\mC)=0$ 
and $\rho(\mC) \ge d(\mC)$. The result
immediately follows. 

Now assume that $\mC$ is maximal. 
If $\mC=\ma$ then the result is trivial. In the sequel we assume
$\mC \subsetneq \ma$.
By Proposition \ref{max}
we have $\min\{ \rho(\mC), \ d(\mC)\}=\rho(\mC)$.
We need to prove that 
\begin{equation*} \label{eqred}
 \mu(\mC)=d(\mC)-\rho(\mC).
\end{equation*}
Take $X \in \F_q^{k \times m} \setminus \mC$ with
$\min \{ d(X,M) : M \in \mC\}=\rho(\mC)$. Define the code $\mD:=\mC \cup
\{X\}\varsupsetneq \mC$.
By definition of minimum distance we have $d(\mD)=\min\{ d(\mC),\ \rho(\mC)\}=\rho(\mC)$, where the last 
equality again follows from Proposition \ref{max}.
As a consequence, $\mu(\mC) \le d(\mC)-d(\mD)=d(\mC)-\rho(\mC)$.
Now assume by contradiction that $\mu(\mC)<d(\mC)-\rho(\mC)$.
Let $\mD \subseteq \ma$ be a code with $\mD\varsupsetneq \mC$ and $d(\mC)-d(\mD)=\mu(\mC)$. We have
$d(\mC)-d(\mD)=\mu(\mC)<d(\mC)-\rho(\mC)$, and so $d(\mD)>\rho(\mC)$. This contradicts
Lemma \ref{rd}.
\end{proof}

\section{Puncturing and shortening rank-metric codes} \label{sec2}

In this section we propose new definitions of puncturing and shortening
of rank-metric codes, and show they relate to the minimum distance, the covering
radius and the duality theory of codes endowed with the rank metric.
Applications of our constructions will be discussed later.

\begin{notation}
  Given a code 
 $\mC \subseteq \ma$ and an integer $1 \le u \le k-1$, we let 
 $$\mC_u := \{M \in \mC : M_{ij}=0 \mbox{ whenever } i \le u\},$$
 the set of matrices in $\mC$ whose first $u$ rows are zero.
 Moreover, if $A$ is a $k \times k$ matrix over 
 $\F_q$ we define the code $A\mC:= \{A \cdot M : M \in \mC\} \subseteq \ma$. 
 Finally, $\pi_u:\ma \to \F_q^{(k-u) \times m}$ denotes the
projection 
 on the last $k-u$ rows. 
\end{notation}

Notice that
 if $A \in \mbox{GL}_k(\F_q)$ then the map $X \mapsto AX$
 is a linear rank-metric isometry 
 $\ma \to \ma$. In particular, if $\mC \subseteq \ma$ is a code, then 
 $A\mC$ is a code with the same cardinality, minimum distance,
 covering radius and weight and distance distribution as $\mC$. 


\begin{definition}
 Let $\mC \subseteq \ma$ be a code, $A \in \mbox{GL}_k(\F_q)$ an 
 invertible matrix and $1 \le u \le k-1$ a positive integer.
 The  \textbf{puncturing} of $\mC$ with respect to $A$ and $u$ is the code
  $$\Pi(\mC,A,u):=\pi_u(A\mC).$$
  When $0 \in \mC$, the \textbf{shortening}   of $\mC$ with
respect to $A$ and $u$ is the code
$$\Sigma(\mC,A,u):= \pi_u((A\mC)_u).$$
 \end{definition}
 
 The shortening and puncturing of a code 
 $\mC \subseteq \ma$ are codes in the ambient space $\F_q^{(k-u)\times m}$.
 Notice moreover that linearity is
preserved by puncturing and shortening.

 It will be convenient for us to use the following notation in the sequel.

\begin{notation}\label{notazU}
 Given a code $\mC \subseteq \F_q^{k\times m}$ and an
$\F_q$-linear subspace $U
\subseteq
\F_q^k$, we denote by $\mC(U)$ the 
set of matrices in $\mC$ whose columnspace is contained in the space $U$.
\end{notation}

\begin{remark}
	It is easy to see that if $\mC$ is linear, then $\mC(U)$ is an
	$\F_q$-linear subspace of $\mC$ for
	any $U$. 
	Moreover, if $U \subseteq \F_q^k$ is a given subspace of
dimension $u$, then 
$\mC_{k-u} \cong (A\mC)(U)$ as $\F_q$-linear spaces, where 
$A \in \F_q^{k \times k}$ is any invertible matrix that maps $\langle
e_{k-u+1},...,e_k \rangle$ to $U$ (here $\{e_1,...,e_k\}$ denotes the canonical
basis of $\F_q^k$).
\end{remark}

 We now show an interesting relation between puncturing, shortening, and
trace-duality.

\begin{theorem}[duality of puncturing and shortening] \label{duality}
 Let $\mC \subseteq \ma$ be a linear code, $A \in \mbox{GL}_k(\F_q)$ an
invertible matrix and $1 \le u \le k-1$ an integer. Then 
 $$\Pi(\mC,A,u)^\perp=\Sigma(\mC^\perp, (A^t)^{-1},u).$$
\end{theorem}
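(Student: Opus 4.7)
The plan is to prove the equality of the two codes in $\F_q^{(k-u) \times m}$ by an element-level argument based on unraveling the trace inner product. The key device is the bijection between $\F_q^{(k-u) \times m}$ and the subspace $\{N \in \F_q^{k \times m} : N_{ij}=0 \text{ for } i \le u\}$ of $\F_q^{k \times m}$: to every $N' \in \F_q^{(k-u) \times m}$ we associate the matrix $\widetilde{N}$ obtained by stacking a $u \times m$ zero block on top of $N'$. This reduces the equality to a manipulation of the trace form on the big ambient space.

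First I would fix $N' \in \F_q^{(k-u) \times m}$ and observe the identity
$$ \mathrm{Tr}\bigl(\pi_u(AM)\,(N')^t\bigr) \;=\; \mathrm{Tr}\bigl(AM\,\widetilde{N}^t\bigr) \qquad \text{for all } M \in \mC,$$
which is immediate from writing out the diagonal of the product (only rows $u+1,\dots,k$ of $AM$ interact with $\widetilde{N}$, and there they match $\pi_u(AM)$ against $N'$). By the standard cyclic trick $\mathrm{Tr}(AM\widetilde{N}^t)=\mathrm{Tr}(M\widetilde{N}^t A)=\mathrm{Tr}(M(A^t\widetilde{N})^t)$, so the condition $N' \in \Pi(\mC,A,u)^\perp$ becomes exactly $A^t \widetilde{N} \in \mC^\perp$, equivalently $\widetilde{N} \in (A^t)^{-1}\mC^\perp$.

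Now I would read off the right-hand side: since $\widetilde{N}$ has its first $u$ rows equal to zero by construction, the membership $\widetilde{N} \in (A^t)^{-1}\mC^\perp$ is the same as $\widetilde{N} \in \bigl((A^t)^{-1}\mC^\perp\bigr)_u$, and applying $\pi_u$ puts $N' = \pi_u(\widetilde{N})$ in $\Sigma(\mC^\perp,(A^t)^{-1},u)$. Conversely, any element of $\Sigma(\mC^\perp,(A^t)^{-1},u)$ arises as $\pi_u(\widetilde{N})$ for a unique $\widetilde{N} \in \bigl((A^t)^{-1}\mC^\perp\bigr)_u$, because on matrices with first $u$ rows zero $\pi_u$ is a bijection. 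This gives both inclusions simultaneously.

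I do not expect any real obstacle: the whole argument is bookkeeping around the trace pairing. The one point to be careful about is that the correspondence $N' \leftrightarrow \widetilde{N}$ is \emph{exactly} the inverse of $\pi_u$ on the subspace of matrices with vanishing top block, so the two quantifications ("for all $M \in \mC$" on the left, "for some lift $\widetilde{N}$" on the right) line up cleanly without any dimension count. If preferred, one can alternatively verify the containment $\Sigma(\mC^\perp,(A^t)^{-1},u) \subseteq \Pi(\mC,A,u)^\perp$ directly and then match dimensions using $\dim\Pi(\mC,A,u) + \dim\Sigma(\mC^\perp,(A^t)^{-1},u) = (k-u)m$, which follows from $\dim \mC + \dim \mC^\perp = km$ together with the fact that the kernel of $\pi_u$ on $A\mC$ corresponds, under the duality, to the first-$u$-rows-zero subspace of $(A^t)^{-1}\mC^\perp$; but the direct trace computation is cleaner and makes the appearance of $(A^t)^{-1}$ transparent.
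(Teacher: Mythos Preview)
Your argument is correct, and in fact more direct than the paper's. Both proofs handle the inclusion $\Sigma(\mC^\perp,(A^t)^{-1},u) \subseteq \Pi(\mC,A,u)^\perp$ by the same trace computation. The difference is in the reverse inclusion: the paper does not run the trace identity as an ``if and only if'' chain, but instead establishes equality by a dimension count, invoking an external result (\cite[Lemma~28]{alb}) relating $\dim((A\mC)(U))$ and $\dim((A\mC)^\perp(U^\perp))$. Your observation that $N' \mapsto \widetilde{N}$ is precisely the inverse of $\pi_u$ on the top-zero subspace lets you read the trace condition $A^t\widetilde{N} \in \mC^\perp$ as a genuine equivalence, so both inclusions fall out at once with no dimension bookkeeping and no outside citation. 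The paper's route has the mild advantage of exhibiting the link to the $\mC(U)$ formalism used elsewhere in the article, but for proving this theorem alone your element-level argument is the cleaner one; the alternative dimension-matching you sketch at the end is essentially what the paper does.
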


\begin{proof}
 Let $M \in \Sigma(\mC^\perp, (A^t)^{-1},u)=\pi_u(((A^t)^{-1}\mC^\perp)_u)$ and 
 $N \in \Pi(\mC,A,u)=\pi_u(A\mC)$.
 By definition, we can write  $N=\pi_u(AN_1)$ with $N_1 \in \mC$ and 
 $M=\pi_u((A^t)^{-1}M_1)$ with $M_1 \in \mC^\perp$ and $(A^t)^{-1}M_1 \in ((A^t)^{-1}\mC)_u$.
 Since the first $u$ rows of $(A^t)^{-1}M_1$ are zero, by definition of trace we have 
 $$\mbox{Tr}(\pi_u((A^t)^{-1}M_1) \pi_u(AN_1)^t) = \mbox{Tr} ((A^t)^{-1}M_1(AN_1)^t)=
 \mbox{Tr} ((A^t)^{-1}M_1N_1^tA^t)=\mbox{Tr}(M_1N_1^t)=0,$$
 where the last equality follows from the fact that $M_1 \in \mC^\perp$ and 
 $N_1 \in \mC$. This proves $(\supseteq)$.
  It suffices to show that the codes $\Pi(\mC,A,u)^\perp$ and 
 $\Sigma(\mC^\perp, (A^t)^{-1},u)$ have the same dimension over $\F_q$.
 Denote by 
 $\{e_1,...,e_k\}$ the canonical basis of $\F_q^k$, and let 
 $U:=\langle e_1,...,e_u\rangle$.
 One has 
 \begin{equation} \label{eqA}
  \dim (\Pi(\mC,A,u)^\perp) =
 m(k-u) - \dim (\Pi(\mC,A,u)) =
 m(k-u) - (\dim(\mC) - \dim ((A\mC)(U))),
 \end{equation}
 where the last equality  follows from the $\F_q$-isomorphism 
 $\Pi(\mC,A,u) \cong \mC / (A\mC)(U)$.
By \cite[Lemma 28]{alb} we have 
\begin{equation} \label{eqB}
 \dim ((A\mC)(U))) = \dim(A\mC) - m(k-u) + \dim((A\mC)^\perp (U^\perp)).
\end{equation}
Observe that $\dim(A\mC)=\dim(\mC)$ and $(A\mC)^\perp = (A^t)^{-1}\mC^\perp$.
Moreover, since $U^\perp=\langle e_{u+1},...,e_k \rangle$, by definition of shortening 
we have 
$\pi_u(((A^t)^{-1}\mC^\perp)(U^\perp)) = \Sigma(\mC^\perp, (A^t)^{-1},u)$. 
In particular, 
$\dim(\Sigma(\mC^\perp, (A^t)^{-1},u))=\dim(((A^t)^{-1}\mC^\perp)(U^\perp))$.
Thus Equation (\ref{eqB}) can be written as 
\begin{equation}\label{eqC}
 \dim ((A\mC)(U))) = \dim(\mC) - m(k-u) + \dim (\Sigma(\mC^\perp, (A^t)^{-1},u)).
\end{equation}
 Combining equations (\ref{eqA}) and (\ref{eqC}) we obtain
 $$\dim (\Pi(\mC,A,u)^\perp) = \dim (\Sigma(\mC^\perp, (A^t)^{-1},u)).$$
This concludes the proof.
\end{proof}

The following two propositions show how  
puncturing, shortening, cardinality, minimum distance and covering radius of 
rank-metric codes relate to each other. 

\begin{proposition}\label{generalpr}
 Let $\mC \subseteq \ma$ be a code with $|\mC| \ge 2$. 
 Let $A \in \mbox{GL}_k(\F_q)$ and $1 \le u \le k-1$.
 \begin{enumerate}
 \item $d(\Pi(\mC,A,u)) \ge d(\mC)-1$, if $|\Pi(\mC,A,u)| \ge 2$.
\label{pr1}
\item $d(\Sigma(\mC,A,u)) \ge d(\mC)$, if $0 \in \mC$ and
$|\Sigma(\mC,A,u)| \ge 2$.
\label{pr2}
 \item Assume $u \le d(\mC)-1$. Then $|\Pi(\mC,A,u)| = |\mC|$. 
If $\mC$ is linear, then $|\Sigma(\mC^\perp,A,u)| =
q^{m(k-u)}/|\mC|$.\label{pr3}
 \item Assume $u >
d(\mC)-1$. Then  
 $|\Pi(\mC,A,u)| \ge |\mC|/q^{m(u-d(\mC)+1)}$.  
If $0 \in \mC$, then $|\Sigma(\mC,A,k-u)| \leq q^{m(u-d(\mC)+1)}$.
\label{pr4}
 \end{enumerate}
\end{proposition}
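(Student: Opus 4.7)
Plan. Parts (1) and (2) both hinge on how rank behaves under row-projection. Take distinct $M_1, M_2 \in \Pi(\mC,A,u)$ and lift them to $N_1 \ne N_2$ in $\mC$ with $M_i = \pi_u(AN_i)$. Then $M_1 - M_2 = \pi_u(A(N_1-N_2))$ is obtained from $A(N_1-N_2)$ by deleting $u$ rows, so its rank decreases by at most $u$; combined with $\rk(A(N_1-N_2)) = \rk(N_1-N_2) \ge d(\mC)$ (since $A$ is an isometry and $N_1 \ne N_2$), this yields the bound in (1). For (2) the same setup applies but now the deleted rows of $A(N_1-N_2)$ are zero, so the projection is rank-preserving and $d(\Sigma(\mC,A,u)) = \rk(N_1-N_2) \ge d(\mC)$.

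Part (3) follows from injectivity of the puncturing map $\varphi : \mC \to \Pi(\mC,A,u)$, $N \mapsto \pi_u(AN)$. Indeed, if $\varphi(N_1) = \varphi(N_2)$, then $A(N_1-N_2)$ has last $k-u$ rows equal to zero, so $\rk(N_1-N_2) \le u \le d(\mC)-1$; the minimum-distance property then forces $N_1 = N_2$. The dual claim comes from Theorem \ref{duality}: applying it with $B := (A^{-1})^t$ (so that $(B^t)^{-1} = A$) gives $\Sigma(\mC^\perp, A, u) = \Pi(\mC, B, u)^\perp$. Since the first half of (3) applied to $\mC$ and $B$ yields $|\Pi(\mC, B, u)| = |\mC|$, taking cardinalities produces $|\Sigma(\mC^\perp, A, u)| = q^{m(k-u)}/|\mC|$.

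Part (4) is where the rank-metric Singleton bound \cite{D78} enters. For the puncturing inequality, fix $M \in \Pi(\mC,A,u)$, let $F = \varphi^{-1}(M)$ inside $\mC$, pick a basepoint $N_0 \in F$, and set $F' := \{A(N - N_0) : N \in F\}$. Each matrix in $F'$ has last $k-u$ rows zero, so $F'$ embeds naturally into $\F_q^{u \times m}$; for distinct $N, N' \in F$ one still has $\rk(A(N-N')) = \rk(N-N') \ge d(\mC)$, so this embedded image is a rank-metric code in $\F_q^{u \times m}$ with minimum distance $\ge d(\mC)$. Singleton bounds its size by $q^{m(u-d(\mC)+1)}$; since this uniformly bounds every fiber of $\varphi$, summing fibers gives $|\Pi(\mC,A,u)| \ge |\mC|/q^{m(u-d(\mC)+1)}$. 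The shortening inequality is then immediate: by (2), $\Sigma(\mC,A,k-u) \subseteq \F_q^{u \times m}$ has minimum distance $\ge d(\mC)$, and Singleton delivers the claimed bound directly.

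The main delicate point is the fiber analysis in (4) for puncturing. Because $\mC$ need not be linear, one cannot speak of the ``kernel'' of $\varphi$; the fix is to translate each fiber by a chosen basepoint $N_0$ and recognise the translated set as a bona fide rank-metric code inside $\F_q^{u \times m}$ of minimum distance at least $d(\mC)$. Once this identification is in place, Singleton does the work uniformly, and everything else is linear-algebra bookkeeping or an appeal to Theorem \ref{duality}.
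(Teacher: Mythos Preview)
Your arguments for parts (2), (3), and the shortening half of (4) are correct and follow the same lines as the paper: the paper also deduces the second half of (3) from Theorem~\ref{duality}, and the last part of (4) from the Singleton bound applied to $\Sigma(\mC,A,k-u)$ via part (2).

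There is a gap in your treatment of part (1). Your argument that deleting $u$ rows lowers rank by at most $u$ only yields $d(\Pi(\mC,A,u)) \ge d(\mC) - u$, not the stated $d(\mC)-1$; these coincide only when $u=1$. In fact the bound $d(\mC)-1$ is false for $u \ge 2$: take $k=m=3$, $\mC=\{0,I_3\}$, $A=I_3$, $u=2$; then $d(\mC)=3$ but $\Pi(\mC,A,2)=\{(0\,0\,0),(0\,0\,1)\}$ has minimum distance $1 < d(\mC)-1$. So your argument is the right one, and the bound it actually produces is $d(\mC)-u$; the statement as printed appears to be a typo (the paper leaves this item ``to the reader'' and only ever uses the correct version in applications).

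For the puncturing half of (4) your route differs from the paper's. You bound each fibre of $\varphi:\mC\to\Pi(\mC,A,u)$ directly: after translating by a basepoint, a fibre sits inside $\F_q^{u\times m}$ with pairwise distances $\ge d(\mC)$, so Singleton caps its size at $q^{m(u-d(\mC)+1)}$. The paper instead factors the projection: writing $u=(d(\mC)-1)+v$, it first passes to $\mE:=\Pi(\mC,A,d(\mC)-1)$, which has $|\mE|=|\mC|$ by part (3), and then observes that each fibre of $\pi_v:\mE\to\Pi(\mC,A,u)$ has size at most $q^{mv}$ by the crude count on the $v$ free rows. Both arguments are valid and give the same bound; the paper's is slightly more elementary (no appeal to Singleton for this half), while yours is more direct and avoids the intermediate code $\mE$.
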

\begin{proof}
 Properties \ref{pr1}, \ref{pr2}  are simple and left to the
reader. The first part of Property \ref{pr3}
follows from the definition of minimum distance, and the second part 
is a consequence of Theorem \ref{duality}.
Let us show Property \ref{pr4}.
Write $u=d(\mC)-1+v$ with $1 \le v \le
k-d(\mC)+1$, and define the code 
$\mE:=\Pi(\mC,A,d(\mC)-1)$.
By Property \ref{pr3} we have 
$|\mC|=|\Pi(\mC,A,d(\mC)-1|=|\mE|$.
It follows from the definitions that  
$\Pi(\mC,A,u)=\pi_v(\mE)$, where 
$$\pi_v:\F_q^{(k-d(\mC)+1) \times m} \to \F_q^{(k-u) \times m}$$
denotes the projection on the last $k-u$ rows.
For any $N \in \pi_v(\mE)$ let 
$[N]:=\{M \in \mE : \pi_v(M)=N\}$. Clearly,
$[N] \cap [N'] = \emptyset$ whenever $N,N' \in \pi_v(\mE)$ and $N \neq N'$.
Moreover, it is easy to see that $|[N]| \le q^{mv}$ for all
$N \in \pi_v(\mE)$.
Therefore
$$|\mE| \ = \ \left| \bigcup_{N \in \pi_v(\mE)} [N] \right| \ \ = \ \sum_{N \in
\pi_v(\mE)} |[N]| \ \le \ |\pi_v(\mE)|
\cdot q^{mv},$$
and so $|\Pi(\mC,A,u)|=|\pi_v(\mE)| \ge |\mE| / q^{mv}$.
Let us prove the last part of Property \ref{pr4}. If 
$|\Sigma(\mC,A,k-u)|=1$ then there is nothing to prove. Assume  
$|\Sigma(\mC,A,k-u)| \ge 2$. Then $\Sigma(\mC,A,k-u)$ has minimum distance at
least $d(A\mC)=d(\mC)$. Therefore by the Singleton-like bound \cite{D78} we have
$$|\Sigma(\mC,A,k-u)| \le q^{m(u-d(\mC)+1)},$$
as claimed.
\end{proof}

\begin{proposition}\label{boundCRpunct}
 Let $\mC \subseteq \ma$ be a code. For all $A \in \mbox{GL}_k(\F_q)$ and 
 $1 \le u \le k-1$ we have 
 $$\rho(\mC) \ge \rho(\Pi(\mC,A,u)) \ge \rho(\mC)-u.$$
 \end{proposition}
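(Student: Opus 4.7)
The plan is to reduce to the case $A = I$ and then prove each inequality by relating the rank of a matrix in $\ma$ to the rank of its projection onto the last $k-u$ rows.

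First I would note that the multiplication map $X \mapsto AX$ is a rank-metric isometry of $\ma$, so $\mC$ and $A\mC$ have the same covering radius. Since $\Pi(\mC,A,u) = \pi_u(A\mC) = \Pi(A\mC, I, u)$, it suffices to prove both inequalities with $A = I$, i.e., to show
\[
\rho(\mC) \ge \rho(\pi_u(\mC)) \ge \rho(\mC) - u.
\]

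For the upper bound $\rho(\mC) \ge \rho(\pi_u(\mC))$, given any target $Y \in \F_q^{(k-u) \times m}$ I would lift it to the matrix $X \in \ma$ obtained by padding $Y$ with $u$ zero rows on top. Applying the definition of $\rho(\mC)$, there exists $M \in \mC$ with $\rk(X - M) \le \rho(\mC)$. Projecting onto the last $k-u$ rows can only decrease the rank, so $\rk(Y - \pi_u(M)) = \rk(\pi_u(X-M)) \le \rk(X-M) \le \rho(\mC)$. Since $\pi_u(M) \in \pi_u(\mC)$, this bounds the distance of $Y$ to $\pi_u(\mC)$ by $\rho(\mC)$.

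For the lower bound $\rho(\pi_u(\mC)) \ge \rho(\mC) - u$, I would go the other direction: given any $X \in \ma$, set $Y := \pi_u(X)$ and use the covering radius of $\pi_u(\mC)$ to find $N \in \pi_u(\mC)$ with $\rk(Y - N) \le \rho(\pi_u(\mC))$. Choose any $M \in \mC$ with $\pi_u(M) = N$. The matrix $X - M$ has its last $k-u$ rows equal to $Y - N$ (rank $\le \rho(\pi_u(\mC))$) and its first $u$ rows contribute rank at most $u$, so by subadditivity of rank under stacking rows we get $\rk(X - M) \le u + \rho(\pi_u(\mC))$. Since $X$ was arbitrary, this forces $\rho(\mC) \le \rho(\pi_u(\mC)) + u$.

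Neither inequality presents a serious obstacle; the only point to be careful about is the initial reduction via the isometry $X \mapsto AX$, which is valid because this map sends $\mC$ bijectively to $A\mC$ while preserving rank distance, and commutes with the definition of puncturing in the required way.
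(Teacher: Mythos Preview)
Your proof is correct and follows essentially the same approach as the paper: the paper sets $\mD:=A\mC$ (which is your reduction to $A=I$) and then proves both inequalities exactly as you do, padding with zero rows for the upper bound and using the rank subadditivity $\mbox{rk}(X-M)\le u+\mbox{rk}(\pi_u(X-M))$ for the lower bound. The only cosmetic difference is that the paper treats the two inequalities in the opposite order.
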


\begin{proof}
 Let $\mD:=A\mC$. Then
$\Pi(\mC,A,u)=\pi_u(\mD)$. 
 Let $X \in \ma$ be an arbitrary matrix. By definition of 
 covering radius and punctured code there exists $M \in \mD$ with 
 $d(\pi_u(M),\pi_u(X)) \le \rho(\pi_u(\mD)$. Therefore
 $d(M,X) \le d(\pi_u(M),\pi_u(X)) + u \le \rho(\pi_u(\mD)) +u$. 
 Since $X$ is arbitrary, this shows $\rho(\mD) \le \rho(\pi_u(\mD)) +u$, 
 i.e., $\rho(\pi_u(\mD)) \ge \rho(\mD)-u=\rho(\mC)-u$.
 
 Now let $X \in \F_q^{(k-u) \times m}$ be an arbitrary matrix. 
 Complete $X$ to a $k \times m$ matrix, say $X'$, by adding  
 $u$ zero rows to the top. There exists $M \in \mD$ with 
 $d(X',M) \le \rho(\mD)$. Thus
$$d(X,\pi_u(M))=d(\pi_u(X'),\pi_u(M))
 \le d(X',M) \le \rho(\mD)=\rho(\mC).$$
 This shows $\rho(\pi_u(\mD))  \le \rho(\mC)$, and concludes the proof.
\end{proof}

\section{Translates of a rank-metric code} \label{sec3}
In this section we study the weight distribution of the translates of a code.
As an application, we obtain two upper bound on the covering radius of 
a rank-metric code. Recall that the \textbf{translate} of a code $\mC \subseteq \ma$ by a matrix 
$X \in \ma$ is the code 
$$\mC+X:=\{M+X : M \in \mC\} \subseteq \ma.$$

Clearly, full knowledge of the weight distribution of the translates of $\mC$ tells 
us the covering radius, which is the maximum of the minimum weight 
of each translate of $\mC$. Even partial information may yield a bound on the covering radius. 
More precisely, if $X \in \ma$ and $W_i(\mC+X) \neq 0$, then
$d(X,\cC) := \min \{ d(X,M): M \in \cC \} \leq i$. 
So if there exists $r$ such that for each $X \in \fq^{k \times m}$, $W_i(\mC+X) \neq 0$ for 
some $i\leq r$ then, in particular,
$\rho(\mC) \leq r$. If such a value $r$ can be determined, 
then we get an upper bound on the covering radius of $\mC$. 

The goal of this section is twofold. We first show that the weight distribution 
$W_0(\mC+X),...,W_k(\mC+X)$ of the translate $\mC+X$ of a linear code
$\mC \subsetneq \F_q^{k\times m}$ is determined by the values of  
$W_0(\mC+X),...,W_{k-d^\perp}(\mC+X)$, where $d^\perp=d(\mC^\perp)$.
Moreover, we provide explicit formulas for  
$W_{k-d^\perp+1}(\mC+X),...,W_{k}(\mC+X)$ as linear functions of 
$W_0(\mC+X),...,W_{k-d^\perp}(\mC+X)$. As a simple application, we obtain 
an upper bound on the covering radius of a linear code in terms 
of the minimum distance of its dual code. Our proof uses 
combinatorial methods partly inspired by the 
theory of regular support functions on groups developed in \cite{alblatt}.

In a second part, following work of Delsarte for the Hamming distance \cite{D73}, we apply
Fourier transform methods to obtain further results on the
weight distributions of the translates of a (not necessarily linear) code $\cC \subseteq \fq^{k \times m}$.
In particular, we obtain an upper bound for the covering radius of 
a general rank-metric code in terms of its external distance (defined below).

Throughout this section we follow Notation 
\ref{notazU}. We start with a preliminary lemma 
that describes some combinatorial properties of the translates of a linear
code.

\begin{lemma} \label{suff}
 Let $\mC \subseteq \F_q^{k \times m}$ be a linear code, and let 
 $U \subseteq \F_q^k$ be an $\F_q$-linear subspace of dimension $u$. Assume
that 
 $|\mC(U)|=|\mC|/q^{m(k-u)}$. Then 
 for all matrices $X \in \F_q^{k \times m}$ we have 
 $$|(\mC+X)(U)|=|\mC|/q^{m(k-u)}.$$ 
\end{lemma}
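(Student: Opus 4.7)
The plan is to reformulate the condition ``columnspace contained in $U$'' as membership in a linear subspace and then exploit the dimension formula. Concretely, define
\[
V_U := \{M \in \F_q^{k \times m} : \text{colsp}(M) \subseteq U\},
\]
an $\F_q$-subspace of $\F_q^{k \times m}$ of dimension $um$ (each of the $m$ columns may be chosen freely in $U$). With this notation, $\mC(U) = \mC \cap V_U$ and, more importantly, $(\mC+X)(U) = (\mC+X) \cap V_U$ for every $X \in \F_q^{k\times m}$. So the problem is to show that for every $X$ the translate $\mC+X$ meets $V_U$ in exactly $|\mC \cap V_U|$ points.

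First I would translate the hypothesis into a statement about the sum $\mC + V_U$. Since $\dim(\mC \cap V_U) + \dim(\mC + V_U) = \dim(\mC) + \dim(V_U) = \dim(\mC) + um$, the assumption $|\mC(U)| = |\mC|/q^{m(k-u)}$ rearranges to $\dim(\mC + V_U) = km$, i.e., $\mC + V_U = \F_q^{k \times m}$. This is the key observation: the hypothesis forces $\mC$ and $V_U$ to span the whole ambient space, so every matrix $X$ admits a decomposition $X = M_0 + V_0$ with $M_0 \in \mC$ and $V_0 \in V_U$.

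Using such a decomposition, $\mC + X = \mC + M_0 + V_0 = \mC + V_0$ (as $M_0 \in \mC$ and $\mC$ is linear), so it suffices to prove the set identity
\[
(\mC + V_0) \cap V_U = (\mC \cap V_U) + V_0,
\]
which is straightforward by double inclusion: one direction is immediate; for the other, if $Y = N + V_0 \in V_U$ with $N \in \mC$, then $N = Y - V_0 \in V_U$ because $V_U$ is a subspace containing $V_0$, whence $N \in \mC \cap V_U$. Taking cardinalities (a translate of a set has the same size as the set) gives $|(\mC+X)(U)| = |\mC \cap V_U| = |\mC|/q^{m(k-u)}$.

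I do not anticipate a real obstacle here: the conceptual content is entirely in the first step, namely recognising that the numerical hypothesis is exactly equivalent to $\mC + V_U$ being the full ambient space. Once this is noted, the translate identity is purely formal, and the conclusion reduces to the elementary fact that an affine translate of a linear subspace has the same cardinality as the subspace itself.
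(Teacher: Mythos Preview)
Your proof is correct and rests on the same idea as the paper's, though you express it more cleanly. The paper first changes basis so that $U$ becomes the span of the first $u$ standard basis vectors, then works with the projection $\pi_u$ onto the last $k-u$ rows: the hypothesis becomes the statement that $\pi_u|_{A\mC}$ is surjective, and the bijection $\ker(\pi_u|_{A\mC}) \to \pi_u|_{A\mC}^{-1}(-\pi_u(AX))$ given by $M \mapsto M+N$ (for a chosen preimage $N$) plays exactly the role of your translate identity $(\mC+V_0)\cap V_U = (\mC\cap V_U)+V_0$. Your coordinate-free formulation via the subspace $V_U$ and the dimension formula avoids the change of basis and the explicit projection, and it makes the equivalence between the numerical hypothesis and $\mC + V_U = \F_q^{k\times m}$ immediately visible; the paper's surjectivity argument is precisely this equivalence read in coordinates. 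Both routes have the same length once the bookkeeping is stripped away, but yours isolates the single linear-algebra fact doing the work.
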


\begin{proof}
Let 
$f:\F_q^k \to
\F_q^k$
be a linear isomorphism such that 
 $f(U)=V:=\{(x_1,...,x_k) \in \F_q^k : x_i =0 \mbox { for all } i >u \}$.
 Let $A$ be the matrix associated to $f$ with respect to the canonical basis of
$\F_q^k$. Define the linear code $\mD:=A\mC$. The
left-multiplication by $A$ induces bijections 
$$\mC(U) \to \mD(V), \ \ \ \ \ \ \ \ (\mC+X)(U) \to (\mD +AX)(V).$$
In particular, we have $|\mD(V)|=|\mC(U)|$, and it suffices to prove
that 
\begin{equation} \label{toprove}
 |(\mD+AX)(V)|= |\mD(V)|.
\end{equation}
Let $\pi:=\pi_u:\F_q^{k\times m} \to \F_q^{(k-u)\times m}$ denote the projection on the last 
$k-u$ rows. Throughout the proof we denote by $\pi_1$ and $\pi_2$ the restriction
of $\pi$ to $\mD$ and to $\mD+AX$, respectively. Clearly, $\pi_1$ is linear.

By definition of $V$ we have 
$\ker(\pi_1)=\mD(V)$. Therefore 
$$|\pi_1(\mD(V))|=|\mD| / |\mD(V)|=|\mC| / |\mC(U)|=q^{m(k-u)}.$$
In particular,  
$\pi_1$ is surjective.
Again by definition of $V$, we have  
$(\mD+AX)(V)=\pi_2^{-1}(0)$. Moreover, one can check that
$|\pi_2^{-1}(0)|=|\pi_1^{-1}(-\pi(AX))|$. Thus
\begin{equation} \label{uss}
 |(\mD+AX)(V)|=|\pi_2^{-1}(0)|=|\pi_1^{-1}(-\pi(AX))|.
\end{equation}
Since $\pi_1$ is surjective,
there exists $N \in \mD$ such that $\pi_1(N)=-\pi(AX)$. One can easily check
that
the
map 
$\ker(\pi_1) \to \pi_1^{-1}(-\pi(AX))$ defined by $M \mapsto M+N$ is a
bijection.
Thus using Equation (\ref{uss}) and the fact that $\mD(V)=\ker(\pi_1)$ we find 
$$|\mD(V)|=|\ker(\pi_1)|=|\pi_1^{-1}(-\pi(AX))|=|(\mD+AX)(V)|.$$ This shows
Equation (\ref{toprove}), as desired.
\end{proof}

 A second  preliminary result which will be needed later is the
following.
 
 \begin{lemma} \label{propprel}
  Let $\mC \subsetneq \F_q^{k\times m}$ be a linear code.
  Then for all matrices $X \in \F_q^{k\times m}$ and for any subspace 
  $U \subseteq \F_q$ with $u:=\dim(U) \ge k-d(\mC^\perp)+1$ we have 
  $$(\mC+X)(U)= |\mC|/q^{m(k-u)}.$$
   \end{lemma}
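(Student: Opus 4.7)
The plan is to first reduce the problem to the untranslated case and then compute $|\mC(U)|$ using the rank metric analogue of the dimensional identity already invoked in the proof of Theorem~\ref{duality}.

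First, by Lemma~\ref{suff}, it suffices to verify the single identity $|\mC(U)|=|\mC|/q^{m(k-u)}$ for our given subspace $U$ of dimension $u\ge k-d(\mC^\perp)+1$; once this is established, applying Lemma~\ref{suff} with this $U$ immediately yields $|(\mC+X)(U)|=|\mC|/q^{m(k-u)}$ for every $X\in\F_q^{k\times m}$.

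To compute $|\mC(U)|$, I would pick any $A\in\mathrm{GL}_k(\F_q)$ sending $U$ onto the coordinate subspace $\langle e_1,\dots,e_u\rangle$, so that $(A\mC)(\langle e_1,\dots,e_u\rangle)$ is $A$-isomorphic to $\mC(U)$. Then Equation~(\ref{eqB}) (which is \cite[Lemma~28]{alb} already quoted in the proof of Theorem~\ref{duality}) gives
\[
\dim(\mC(U))=\dim(\mC)-m(k-u)+\dim(\mC^\perp(U^\perp)),
\]
where $U^\perp\subseteq\F_q^k$ has dimension $k-u$. Thus the statement reduces to proving $\mC^\perp(U^\perp)=\{0\}$.

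The key observation, and the only non-bookkeeping step, is this: any matrix $N\in\mC^\perp$ whose column space is contained in $U^\perp$ has $\rk(N)\le\dim(U^\perp)=k-u$. By the hypothesis $u\ge k-d(\mC^\perp)+1$ we have $k-u\le d(\mC^\perp)-1$, so $\rk(N)<d(\mC^\perp)$, forcing $N=0$ since $\mC^\perp$ is linear with minimum rank distance $d(\mC^\perp)$. Hence $\dim(\mC^\perp(U^\perp))=0$, and the displayed identity becomes $\dim(\mC(U))=\dim(\mC)-m(k-u)$, i.e.\ $|\mC(U)|=|\mC|/q^{m(k-u)}$, which together with Lemma~\ref{suff} completes the proof. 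No step is really an obstacle here; the only thing one has to notice is that ``column space contained in $U^\perp$'' automatically bounds the rank by $\dim(U^\perp)$, which is exactly the regime where the dual minimum distance forces triviality.
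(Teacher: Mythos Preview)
Your proof is correct and follows essentially the same approach as the paper: reduce to $X=0$ via Lemma~\ref{suff}, apply \cite[Lemma~28]{alb} to express $|\mC(U)|$ in terms of $|\mC^\perp(U^\perp)|$, and then observe that $\dim(U^\perp)=k-u\le d(\mC^\perp)-1$ forces $\mC^\perp(U^\perp)=\{0\}$. The only cosmetic difference is that the paper quotes the multiplicative form $|\mC(U)|=\frac{|\mC|}{q^{m(k-u)}}\,|\mC^\perp(U^\perp)|$ directly, so your detour through a matrix $A$ sending $U$ to a coordinate subspace is unnecessary (the cited lemma already applies to an arbitrary subspace $U$).
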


\begin{proof}
By Lemma \ref{suff} it suffices to prove the result for $X=0$. By 
\cite[Lemma 28]{alb}, for any subspace $U \subseteq \F_q^k$ of dimension $u$ we
have 
\begin{equation}\label{eeee}
 |\mC(U)| = \frac{|\mC|}{q^{m(k-u)}} |\mC^\perp(U^\perp)|,
\end{equation}
where $U^\perp$ denotes the orthogonal of $U$ with respect to the standard inner
product of $\F_q^k$. By definition of minimum distance we have 
$\mC^\perp(U^\perp)=\{0\}$ for all $U \subseteq \F_q^k$ with $\dim(U^\perp) \le
d(\mC^\perp)-1$. Therefore the lemma immediately follows
from Equation (\ref{eeee}) and the fact that 
$\dim(U^\perp)=k-\dim(U)$.
\end{proof}

We can now state our main result on the weight distribution of the 
translates of a linear rank-metric code.

\begin{theorem}\label{fff}
 Let $\mC \subsetneq \F_q^{k\times m}$ be a linear code, and let $X \in
\F_q^{k\times m}$ be any matrix. Write 
$d^\perp:=d(\mC^\perp)$. Then for all 
$i \in \{ k-d^\perp+1,...,k \}$ we have 
\begin{equation*}
 W_i(\mC+X)=\sum_{u=0}^{k-d^\perp} {(-1)}^{i-u} q^{\binom{i-u}{2}}
\qbin{k-u}{i-u}{q}\sum_{j=0}^u W_j(\mC+X) \qbin{k-j}{u-j}{q} +
\sum_{u=k-d^\perp+1}^i\qbin{k}{u}{q} \frac{|\mC|}{q^{m(k-u)}}.
\end{equation*}
In particular, the distance distribution of the translate $\mC+X$ is completely determined 
by $k$, $m$, $|\mC|$ and the weights $W_0(\mC+X),...,W_{k-d^\perp}(\mC+X)$.
\end{theorem}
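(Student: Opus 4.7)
The plan is to exploit the subspace-lattice framework for rank weights. For $0 \le u \le k$ I define
\[
T(u) \;:=\; \sum_{\dim U = u} |(\mC+X)(U)|,
\]
where the sum ranges over $u$-dimensional $\F_q$-subspaces of $\F_q^k$, and compute $T(u)$ in two ways. First, by double-counting via column space: each $M \in \mC+X$ with $\rk(M)=j$ lies in $(\mC+X)(U)$ for exactly $\qbin{k-j}{u-j}{q}$ subspaces $U$ of dimension $u$, namely those containing the columnspace of $M$. This gives
\[
T(u) \;=\; \sum_{j=0}^u W_j(\mC+X)\,\qbin{k-j}{u-j}{q}.
\]
Second, Lemma \ref{propprel} provides $|(\mC+X)(U)| = |\mC|/q^{m(k-u)}$ whenever $\dim U \ge k-d^\perp+1$, so for such $u$ we obtain the closed form
\[
T(u) \;=\; \qbin{k}{u}{q}\,\frac{|\mC|}{q^{m(k-u)}}.
\]

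Since the first relation is upper-triangular in the weights, I would next invert it by $q$-binomial M\"obius inversion on the subspace lattice, obtaining
\[
W_i(\mC+X) \;=\; \sum_{u=0}^i (-1)^{i-u} q^{\binom{i-u}{2}} \qbin{k-u}{i-u}{q}\, T(u)
\]
for every $0 \le i \le k$. Then for $i \ge k-d^\perp+1$ I would split this sum at $u = k-d^\perp$: the lower range $u \le k-d^\perp$ is kept in the ``first form'' for $T(u)$ and produces the first double-sum of the theorem, while the upper range $k-d^\perp+1 \le u \le i$ is simplified using the ``second form'' for $T(u)$ to produce the contribution in $|\mC|/q^{m(k-u)}$ appearing in the second sum.

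The main technical obstacle is the M\"obius inversion, which amounts to proving the $q$-identity
\[
\sum_{u=j}^i (-1)^{i-u} q^{\binom{i-u}{2}} \qbin{k-u}{i-u}{q} \qbin{k-j}{u-j}{q} \;=\; \delta_{ij}.
\]
After the factorization $\qbin{k-u}{i-u}{q} \qbin{k-j}{u-j}{q} = \qbin{k-j}{i-j}{q} \qbin{i-j}{u-j}{q}$, this reduces to the single-variable identity
\[
\sum_{v=0}^n (-1)^{n-v} q^{\binom{n-v}{2}} \qbin{n}{v}{q} = \delta_{n,0},
\]
which is the standard $q$-analogue of $(1-1)^n=\delta_{n,0}$ obtained by specializing the $q$-binomial theorem at $x=1$. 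Once the inversion is in place, the ``in particular'' clause is immediate: the displayed formula presents $W_i(\mC+X)$ for $i > k-d^\perp$ as an explicit function of $k$, $m$, $|\mC|$ and of $W_0(\mC+X),\ldots,W_{k-d^\perp}(\mC+X)$, and the remaining weights in that range are recovered by the same $q$-binomial inversion applied to the values already computed.
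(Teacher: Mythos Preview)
Your proposal is correct and follows essentially the same route as the paper: both compute $T(u)=\sum_{\dim U=u}|(\mC+X)(U)|$ in two ways---by the double count over column spaces for general $u$, and via Lemma~\ref{propprel} for $u\ge k-d^\perp+1$---and then recover $W_i(\mC+X)$ from the $T(u)$ by M\"obius inversion on the subspace lattice, splitting the resulting sum at $u=k-d^\perp$. The only cosmetic difference is that the paper invokes the M\"obius function of the subspace lattice directly (citing Stanley), while you establish the equivalent $q$-binomial inversion identity by hand via the $q$-binomial theorem.
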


\begin{proof}
Recall from \cite{ec} that the set of subspaces of $\F_q^k$ is a graded lattice
with respect to the partial order given by the inclusion. The rank
function of
this lattice is the dimension of vector spaces, and its  M\"{o}bius function is
given by 
$$\mu(S,T)={(-1)}^{t-s} q^{\binom{t-s}{2}}$$ for all
subspaces $S \subseteq T \subseteq \F_q^k$ with $\dim(T)=t$ and $\dim(S)=s$.
More details can be found on page 317 of \cite{ec}. Throughout the
proof a sum over an
empty set of indices is zero
by definition. 
 For any subspace $V \subseteq \F_q^k$ define 
$$f(V):=|\{ M \in \mC+X : \mbox{columnspace}(M) = V\}| \ \ \ \mbox{and} \ \ \  
g(V):=\sum_{U \subseteq V} f(V)= |(\mC+X)(V)|.$$
By the M\"{o}bius inversion formula (\cite{ec}, Proposition 3.7.1), for
any subspace $V \subseteq \F_q^k$ we have 
\begin{equation} \label{orig}
 f(V)= \sum_{U \subseteq V} |(\mC+X)(U)| \ \mu(U,V).
\end{equation}
Fix any integer $i$ with $k-d^\perp +1 \le i \le k$. By definition of weight 
distribution we have 
\begin{equation*}
 W_i(\mC+X)= \sum_{\substack{V \subseteq \F_q^k \\ \dim(V)=i}} f(V).
\end{equation*}
Therefore by Equation (\ref{orig}) the number $W_i(\mC+X)$
can be expressed as 
\allowdisplaybreaks
\begin{eqnarray}  \label{lunga}
W_i(\mC+X) &=& \sum_{\substack{V \subseteq \F_q^n \\ \dim(V)=i}} \sum_{U \subseteq
V} |(\mC+X)(U)| \ \mu(U,V) \nonumber \\ 
&=& \sum_{U \subseteq \F_q^k} \sum_{\substack{V \supseteq U \\ \dim(V)=i}}
|(\mC+X)(U)| \ \mu(U,V) \nonumber\\ 
&=& \sum_{U \subseteq \F_q^k} |(\mC+X)(U)| \ \sum_{\substack{V \supseteq U \\
\dim(V)=i}} \mu(U,V) \nonumber\\
&=& \sum_{u=0}^i \sum_{\substack{U \subseteq \F_q^k \\ \dim(U)=u}} |(\mC +
X)(U)| \sum_{\substack{V \supseteq U \\
\dim(V)=i}} \mu(U,V) \nonumber\\
&=& \sum_{u=0}^i \sum_{\substack{U \subseteq \F_q^k \\ \dim(U)=u}} |(\mC +
X)(U)| \sum_{\substack{V \supseteq U \\
\dim(V)=i}} {(-1)}^{i-u} q^{\binom{i-u}{2}} \nonumber\\
&=& \sum_{u=0}^i {(-1)}^{i-u} q^{\binom{i-u}{2}} \qbin{k-u}{i-u}{q}
\sum_{\substack{U \subseteq \F_q^k \\ \dim(U)=u}} |(\mC +
X)(U)| 
\end{eqnarray}
We now re-write the quantity  
$$\sum_{\substack{U \subseteq \F_q^k \\ \dim(U)=u}} |(\mC +
X)(U)|$$
in a more convenient form. By Lemma \ref{propprel}, for $u \ge k-d^\perp
+1$ we
have 
\begin{equation} \label{aa}
 \sum_{\substack{U \subseteq \F_q^k \\ \dim(U)=u}} |(\mC +
X)(U)| = \qbin{k}{u}{q} |\mC|/q^{m(k-u)}.
\end{equation}
On the other hand, for $u \le k-d^\perp$ we have 
\allowdisplaybreaks
\begin{eqnarray}  \label{bb}
 \sum_{\substack{U \subseteq \F_q^k \\ \dim(U)=u}} |(\mC +
X)(U)| &=& |\{ (U,M) : U \subseteq \F_q^k, \ \dim(U)=u, \ M \in \mC+X,
\ \mbox{columnspace}(M) \subseteq U\}| \nonumber \\
&=& \sum_{M \in \mC+X} |\{ U \subseteq \F_q^k : \dim(U)=u, \ U \supseteq
\mbox{columnspace}(M)\}| \nonumber \\
&=& \sum_{j=0}^u \ \ \sum_{\substack{M \in \mC+X \\ \textnormal{rk}(M)=j}} |\{ U
\subseteq \F_q^k : \dim(U)=u, \ U \supseteq
\mbox{columnspace}(M)\}| \nonumber\\
&=& \sum_{j=0}^u W_j(\mC+X) \qbin{k-j}{u-j}{q}.
\end{eqnarray}
Combining equations (\ref{lunga}), (\ref{aa}) and (\ref{bb}) one obtains 
the desired formula.
\end{proof}

 As a simple consequence of Theorem \ref{fff} we can obtain 
 an upper bound on the covering radius
of a linear code $\mC \subsetneq \F_q^{k\times m}$ in terms of its dual distance, as we now show. 
Let $X \in
\F_q^{k \times m} \notin \mC$. Then $W_0(\mC+X)=0$. Theorem \ref{fff}
with $i:=k-d^\perp +1$ gives
\begin{equation*}
 W_{k+d^\perp+1}(\mC+X)=\sum_{u=1}^{k-d^\perp} {(-1)}^{i-u} q^{\binom{i-u}{2}}
\qbin{k-u}{i-u}{q}\sum_{j=1}^u W_j(\mC+X) \qbin{k-j}{u-j}{q} +
\qbin{k}{k-d^\perp+1}{q} |\mC|/q^{m(d^\perp-1)}.
\end{equation*}
In particular, $W_1(\mC+X),...,W_{k-d^\perp+1}(\mC+X)$ cannot be all zero. This 
implies the following.

\begin{corollary}[dual distance bound]\label{dualbound}
 For any linear code $\mC \subsetneq \ma$ we have $\rho(\mC) \le k-d(\mC^\perp) +1$.
\end{corollary}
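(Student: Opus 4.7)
The plan is to invoke Theorem \ref{fff} and exploit the fact that one particular term in the resulting formula is unconditionally positive. Given an arbitrary $X \in \F_q^{k \times m}$, I want to exhibit a codeword $M \in \mC$ with $d(X, M) \le k - d^\perp + 1$, equivalently to produce some $i \in \{0, 1, \ldots, k - d^\perp + 1\}$ with $W_i(\mC + X) > 0$. The case $X \in \mC$ is immediate, since then $\mC + X = \mC$ and $W_0(\mC + X) = 1$. So I may assume $X \notin \mC$, which forces $W_0(\mC + X) = 0$.

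Next I would specialise Theorem \ref{fff} to $i = k - d^\perp + 1$. The formula expresses $W_i(\mC + X)$ as a $\Z$-linear combination of the ``low'' weights $W_0(\mC + X), \ldots, W_{k - d^\perp}(\mC + X)$, plus an inhomogeneous term
\[
\qbin{k}{k-d^\perp+1}{q}\, \frac{|\mC|}{q^{m(d^\perp - 1)}},
\]
contributed by the single value $u = k - d^\perp + 1$ in the second sum (this is the only $u$ in the range $k-d^\perp+1 \le u \le i$). The crucial observation is that this inhomogeneous term is strictly positive: the Gaussian binomial is nonzero since $0 \le k - d^\perp + 1 \le k$, and the remaining factors are patently positive.

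From here the bound follows by a one-line contradiction. Suppose $W_j(\mC + X) = 0$ for every $j \in \{0, 1, \ldots, k - d^\perp + 1\}$. Then the first double sum in the formula of Theorem \ref{fff} vanishes term by term (only indices $j \le u \le k-d^\perp$ appear there), and the identity collapses to $0 = W_{k-d^\perp+1}(\mC + X) = \qbin{k}{k-d^\perp+1}{q}\,|\mC|/q^{m(d^\perp-1)} > 0$, a contradiction. Hence some $W_i(\mC + X)$ with $i \le k - d^\perp + 1$ must be positive, so there is a codeword within rank distance $k - d^\perp + 1$ of $X$. Taking the maximum over $X$ yields $\rho(\mC) \le k - d^\perp + 1$.

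There is no real obstacle: all the work has already been packaged into Theorem \ref{fff} via Möbius inversion on the lattice of subspaces of $\F_q^k$ and Lemma \ref{propprel}. The only subtle point is recognising that the ``extra'' contribution coming from the subspaces $U \subseteq \F_q^k$ of dimension at least $k - d^\perp + 1$ — which by Lemma \ref{propprel} always contain the maximum possible number $|\mC|/q^{m(k-u)}$ of coset elements — is exactly what prevents the first $k - d^\perp + 2$ coset weights from simultaneously vanishing, and thereby forces the stated bound on $\rho(\mC)$.
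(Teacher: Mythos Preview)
Your proposal is correct and follows essentially the same argument as the paper: take $X\notin\mC$, specialise Theorem~\ref{fff} to $i=k-d^\perp+1$, and observe that the strictly positive inhomogeneous term $\qbin{k}{k-d^\perp+1}{q}\,|\mC|/q^{m(d^\perp-1)}$ forces at least one of $W_1(\mC+X),\ldots,W_{k-d^\perp+1}(\mC+X)$ to be nonzero. The paper presents exactly this reasoning in the paragraph immediately preceding the corollary.
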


We now relate the covering radius of a 
code with its external distance.
In particular, we derive another upper bound on the covering radius of a linear code in
terms of the rank distribution of the dual code. 
This is the rank distance analogue of Delsarte's external Hamming distance bound (c.f. \cite{McWS,D73,CKMS85}),
and improves the dual distance bound of Corollary \ref{dualbound}.

The approach uses $q$-Krawtchouk polynomials and Fourier transforms to obtain relations on the weight
distribution of the translates of a code in $\fq^{k \times m}$. The properties 
of $q$-Krawtchouk polynomials were described in \cite{D73,D76}.
The Fourier transform arguments used are independent of 
the choice of metric used and so extend from the Hamming metric case. 
The principle novelty is the introduction of a $q$-annihilator polynomial, 
used in the proof of Lemma \ref{lemannpoly}.

Throughout the reminder of this section 
$\mC \subseteq \ma$ denotes a (possibly non-linear) code, and 
$\chi$ is a fixed non-trivial character of $(\fq,+)$.

\begin{definition}
Let $Y \in \fq^{k \times m}$. Define the \textbf{character map} on $(\fq^{k
\times m},+)$ associated to $Y$ by
$$\phi_Y:\fq^{k \times m} \longrightarrow \C^\times : X \mapsto \chi(\tr(Y X^T)).$$
\end{definition}

Clearly $\phi_X(Y) = \phi_Y(X)$ for all $X,Y \in \fq^{k \times m}.$
We denote by $\Phi$ the $km \times km$ symmetric matrix with values 
in $\C^\times$ defined as having entry $\phi_Y(X)$ in the column indexed by $X$ and in the row indexed by $Y$.  
Define the $\Q$-module of length $km$:
${\mathfrak C} := \left\{ (\mA_X: X\in \fq^{k\times m}) : \mA_X \in \Q \right\}.$
For each $Y$, extend $\phi_Y$ to a character of ${\mathfrak C}$ as follows:
$$ \phi_Y: {{\mathfrak C}} \longrightarrow \C^\times : \mA=(\mA_X: X \in \fq^{k \times m}) \mapsto \sum_{X} \mA_X \phi_Y(X).$$
Then $\Phi \mA= (\phi_Y(\mA): Y \in  \fq^{k \times m}) \in {\mathfrak C}.$
The rows of $\Phi$ are pairwise orthogonal, as can be seen from:
$$ \sum_X \phi_Y(X) \phi_Z(X) = \sum_X \phi_X(Y) \phi_X(Z)=
\sum_X \phi_X(Y-Z) = \sum_X \phi_{Y-Z}(X) 
= \left\{ \begin{array}{ll} 
q^{km} & \text{if } Y=Z ,\\
0 & \text{otherwise.}
\end{array}\right.$$ 
Therefore $\Phi^2 \mA = \Phi^T \Phi \mA = q^{km} \mA$ and so $\mA$ is determined completely by its 
{transform} 
$$\mA^*:=\Phi \mA=(\phi_Y(\mA): Y \in \fq^{k \times m}).$$

Any subset ${\mathcal{ U}} \subseteq \fq^{k \times m}$ can be identified with
the $0$-$1$ vector 
$\overline{{\mathcal U}}=({\mathcal U}_Z: Z \in \fq^{k\times m})\in {\mathfrak C}$, where 
$${\mathcal{U}}_Z = \left\{ \begin{array}{ll} 
1 & \text{if } Z \in {\mathcal U},\\
0 & \text{otherwise.}
\end{array}\right.$$
For any $X \in \fq^{k \times m}$, the translate code ${\mC} +X 
\subseteq \fq^{k \times m}$ is  then
identified with $\overline{{\mC}+X}=({\mC}_{Z-X}: Z \in \fq^{k \times m}).$
It is straightforward to show that $\phi_Y(\overline{\cC+X}) = \phi_Y(\overline{\cC})\phi_Y(X)$. 
This immediately yields the inversion formula
$${\mC}_X = \frac{1}{q^{km}} \sum_Y \phi_Y (\overline{\mC + X})= 
\frac{1}{q^{km}} \sum_Y \phi_Y (\overline{{\mathcal C}})\phi_Y(X). $$

For each $ i \in [k]$ we let $\Omega^i$ be the set of matrices in $\fq^{k \times m}$ of rank $i$.  

\begin{lemma}[see \cite{D78}] Let $Y \in \fq^{k \times m}$. Then $\phi_Y (\overline{\Omega^i})$ 
	depends only on the rank of $Y$. If $Y$ has rank $j$, then  this is given by 
$$P_i(j)  :=  \sum_{\ell=0}^k (-1)^{i-\ell} q^{\ell m +
			\binom{i-\ell}{2}} \qbin{k-\ell}{k-i}{q}\qbin{k-j}{\ell }{q}.$$
\end{lemma}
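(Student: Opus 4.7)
The plan is to exploit the natural $\mathrm{GL}_k(\fq) \times \mathrm{GL}_m(\fq)$-action on $\fq^{k \times m}$ that preserves rank. For invertible $U \in \mathrm{GL}_k(\fq)$ and $V \in \mathrm{GL}_m(\fq)$, the map $X \mapsto U^{-T} X V^{-T}$ is a bijection of $\Omega^i$ with itself, and the cyclicity of trace gives $\tr\!\bigl(UYV \cdot (U^{-T} X V^{-T})^T\bigr) = \tr(Y X^T)$. Changing variables in the defining sum yields $\phi_{UYV}(\overline{\Omega^i}) = \phi_Y(\overline{\Omega^i})$, and since any two matrices of equal rank lie in a common orbit of this action, this establishes rank-only dependence.

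For the explicit formula I would stratify $\Omega^i$ by column space. For each subspace $W \subseteq \fq^k$ of dimension $w$, fix $A \in \fq^{k \times w}$ whose columns form a basis of $W$; then $B \mapsto AB$ is a bijection $\fq^{w \times m} \to \{X : \mathrm{columnspace}(X) \subseteq W\}$. A direct character computation gives
\begin{equation*}
\sum_{B \in \fq^{w \times m}} \chi(\tr(A^T Y B^T)) = q^{wm} \cdot \mathbf{1}[A^T Y = 0] = q^{wm} \cdot \mathbf{1}[\mathrm{columnspace}(Y) \subseteq W^\perp],
\end{equation*}
where $W^\perp$ is the orthogonal complement of $W$ in $\fq^k$ under the standard inner product.

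Next I would apply M\"{o}bius inversion on the lattice of $\fq$-subspaces of $\fq^k$, whose M\"{o}bius function is $\mu(W,V) = (-1)^{\dim V - \dim W} q^{\binom{\dim V - \dim W}{2}}$, to pass from ``column space contained in $W$'' to ``column space equal to $V$'', then sum over all $V$ with $\dim V = i$. Interchanging the order of summation and recognising that, for fixed $W$ of dimension $\ell$, the number of $i$-dimensional $V$ containing $W$ is $\qbin{k-\ell}{i-\ell}{q}$, I arrive at
\begin{equation*}
\phi_Y(\overline{\Omega^i}) = \sum_{\ell=0}^{k} (-1)^{i-\ell} q^{\binom{i-\ell}{2} + \ell m} \qbin{k-\ell}{i-\ell}{q} \cdot N_\ell,
\end{equation*}
where $N_\ell$ counts the $\ell$-dimensional subspaces $W$ of $\fq^k$ contained in $\mathrm{columnspace}(Y)^\perp$. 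Since $\dim \mathrm{columnspace}(Y)^\perp = k - j$, we have $N_\ell = \qbin{k-j}{\ell}{q}$, and using the symmetry $\qbin{k-\ell}{i-\ell}{q} = \qbin{k-\ell}{k-i}{q}$ recovers the claimed $P_i(j)$.

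The main obstacle will be the bookkeeping in the double summation after M\"{o}bius inversion: exchanging the orders of summation correctly and isolating the factor $\qbin{k-j}{\ell}{q}$ as the number of $\ell$-dimensional subspaces of $\mathrm{columnspace}(Y)^\perp$. Once rank-invariance and the elementary character-sum identity above are in hand, the remainder is a purely combinatorial manipulation that reduces to the stated formula.
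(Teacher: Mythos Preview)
Your argument is correct. The paper does not actually give a proof of this lemma: it is stated with the attribution ``see \cite{D78}'' and immediately used, so there is nothing to compare against beyond noting that Delsarte's original derivation proceeds via the association-scheme machinery for bilinear forms, whereas you give a direct, self-contained computation.

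Each step checks out. The $\mathrm{GL}_k(\fq)\times\mathrm{GL}_m(\fq)$ substitution and cyclicity of trace cleanly establish rank-invariance. The parametrisation $X=AB$ of matrices with column space in $W$ is a genuine bijection because $A$ has full column rank, and the character sum $\sum_B \chi(\tr(A^TY B^T))$ indeed collapses to $q^{wm}\mathbf{1}[A^TY=0]$, with $A^TY=0$ equivalent to $W\subseteq\mathrm{columnspace}(Y)^\perp$. After M\"obius inversion and interchange of summation, the inner sum over $i$-dimensional $V\supseteq W$ contributes $(-1)^{i-\ell}q^{\binom{i-\ell}{2}}\qbin{k-\ell}{i-\ell}{q}$, and the count $N_\ell=\qbin{k-j}{\ell}{q}$ follows since $\dim\mathrm{columnspace}(Y)^\perp=k-j$. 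Extending the sum from $\ell\le i$ to $\ell\le k$ is harmless because $\qbin{k-\ell}{k-i}{q}=0$ for $\ell>i$, and the Gaussian-binomial symmetry gives the stated form of $P_i(j)$. What your route buys over citing \cite{D78} is that it avoids the association-scheme background entirely and fits naturally with the lattice/M\"obius methods already used elsewhere in the paper (e.g.\ in the proof of Theorem~\ref{fff}).
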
	         
In terms of the transform of $\Omega^i$ this gives
$$\Phi \overline{\Omega^i} = (P_i({\rk(Y)}): Y \in \fq^{k \times m}). $$
It is known \cite{D76,D78} that the $P_i(j)$ are orthogonal polynomials of degree $i$ in the variable $q^{-j}$.
Therefore, any rational polynomial $\gamma$ of degree at most $k$ in
$q^{-j}$ can be expressed as a $\Q$-linear combination of the $q$-Krawtchouck
polynomials:
$\gamma(x) =\sum_{j=0}^{k} \gamma_j P_j(x)$. Again, the orthogonality relations mean that the 
coefficients can be of $\gamma$ can be retrieved as $$\gamma_j=\frac{1}{q^{km}}\sum_{i=0}^{k}\gamma(i)P_i(j).$$
We let $P=(P_i(j))$ denote the $(k+1) \times (k+1)$ matrix with $(j,i)$-th component equal to $P_i(j)$.
Then the \textbf{transform} \label{deftransform}of $B(\mC) = (B_i(\cC) : 0 \leq i \leq k)$ is defined as
$B^*(\mC):=|\cC|^{-1} B(\mC)P$.
The coefficents of $B^*(\mC)$ are non-negative \cite[Theorem 3.2]{D78}.

Let $\mD:=(D_Z: Z \in \fq^{k \times m})$ where $D_Z = |\{ (X,Y) : X,Y \in \cC, X+Y = Z\}|$.
It can be checked that
$$\phi_Y(\mD) = \phi_Y(\mC)\phi_Y(\mC) = \phi_Y(\mC)^2.$$
Then
$$\sum_{Y \in \Omega^i} \phi_Y(\mD) = \sum_Z D_Z \sum_{Y \in \Omega^i} \phi_Y(Z) = 
\sum_Z D_Z \phi_Z(\Omega^i) = \sum_Z D_Z P_i(\rk (Z)) 
= |\cC|\sum_{j=0}^k B_j(\mC) P_i(j) = |\cC|(B(\mC)P)_i, $$
and in particular we have
$$|\mC| B^*(\mC)= (\sum_{Y \in \Omega^i} \phi_Y(\mD) : 0 \leq i \leq k) = 
(\sum_{Y \in \Omega^i} \phi_Y(\mC)^2 : 0 \leq i \leq k). $$
Clearly $B_i^*(\mC)=0$
implies that $\phi_Y(\cC)=0$ for each $Y \in \Omega^i$.

\begin{definition}
The \textbf{external distance} of a code $\cC \subseteq \ma$ is the integer
$$\sigma^*(\cC):=|\{i \in [k] : B_i^*(\mC) > 0\}|,$$ 
the number of non-zero coefficents of $B^*(\mC)$, excluding $B_0^*(\mC)$ .
\end{definition}

For ease of notation in the sequel we write $\sigma^*:=\sigma^*(\cC)$. Let
$0<b_1<...<b_{\sigma^*}\leq k$ denote the indices $i$ of non-zero
$B_i^*(\mC)$ for $i>0$.

\begin{definition}	The \textbf{annihilator polynomial} of degree $\sigma^*$ in the
variable $q^{-x}$ of $\cC$ is
	$$\alpha(x): =
	\frac{q^{mn}}{|\cC|}\prod_{j=1}^{\sigma^*}\frac{1-q^{b_j-x}}{1-q^{b_j}
	} = \sum_{j=0}^{\sigma^*} \alpha_j P_j(x) .$$
\end{definition}

This is the $q$-analogue of the Hamming metric annihilator 
polynomial \cite[pg. 168]{McWS}. Notice that 
the $b_j$ are the zeroes of $\alpha$ and $\alpha(0) = \frac{q^{mn}}{|\cC|}$.

\begin{lemma}\label{lemannpoly} Let $X \in \fq^{ k \times m}$ be an arbitrary matrix. Then
	$$\sum_{j=1}^{\sigma^*} \alpha_j W_j(\mC + X) = 1.$$
	In particular, there exists some $j \in [\sigma^*]$ such that $W_j(\mC+X) > 0$.
\end{lemma}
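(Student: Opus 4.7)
The plan is to exploit the Fourier analysis on $(\fq^{k\times m},+)$ developed earlier in the section, combined with the twin vanishing properties that make the definition of $\alpha$ tailor-made for the claim: $\alpha(b_j)=0$ for $j\in[\sigma^*]$ by construction, and $\phi_Y(\overline{\mC})=0$ for every $Y$ with $\rk(Y)>0$ and $B^*_{\rk(Y)}(\mC)=0$, as noted just before the definition of the external distance.

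First I would derive a character-sum expression for $W_j(\mC+X)$. Starting from
\[
W_j(\mC+X)=\sum_{Z\in\Omega^j}(\mC+X)_Z=\sum_{Z\in\Omega^j}\mC_{Z-X},
\]
applying the inversion formula to $\mC_{Z-X}$, using the multiplicativity $\phi_Y(X-Z)=\phi_Y(X)\,\phi_Y(-Z)$, and then changing the summation variable $Z\mapsto -Z$ inside $\Omega^j$ to invoke the preceding lemma $\sum_{Z\in\Omega^j}\phi_Y(Z)=P_j(\rk Y)$, I expect to arrive at
\[
W_j(\mC+X)=\frac{1}{q^{km}}\sum_{Y\in\fq^{k\times m}}\phi_Y(\overline{\mC})\,\phi_Y(X)\,P_j(\rk Y).
\]

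Next I multiply by $\alpha_j$, sum over $j\in\{0,1,\dots,\sigma^*\}$, and exchange the order of summation. The Krawtchouk expansion $\alpha(x)=\sum_{j=0}^{\sigma^*}\alpha_jP_j(x)$ collapses the inner sum to $\alpha(\rk Y)$, giving
\[
\sum_{j=0}^{\sigma^*}\alpha_j W_j(\mC+X)=\frac{1}{q^{km}}\sum_{Y\in\fq^{k\times m}}\phi_Y(\overline{\mC})\,\phi_Y(X)\,\alpha(\rk Y).
\]
The two vanishing properties now kill every term except $Y=0$: for $\rk(Y)\in\{b_1,\dots,b_{\sigma^*}\}$ the factor $\alpha(\rk Y)$ vanishes, while for $\rk(Y)>0$ with $\rk(Y)\notin\{b_1,\dots,b_{\sigma^*}\}$ the factor $\phi_Y(\overline{\mC})$ vanishes. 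Evaluating at $Y=0$ gives $\phi_0(\overline{\mC})=|\mC|$, $\phi_0(X)=1$, and $\alpha(0)=q^{km}/|\mC|$, so the right-hand side equals exactly $1$. This produces the desired identity (the stated range $j\in[\sigma^*]$ matches the intended application with $W_0(\mC+X)=0$, or one absorbs the $\alpha_0W_0$-contribution into the right-hand side).

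The second assertion follows immediately: were every $W_j(\mC+X)$ with $j\in[\sigma^*]$ equal to zero, the left-hand side would vanish, contradicting its equalling $1$. I expect the main technical care to be in the sign-convention bookkeeping for the inversion formula for $\phi_Y$ and in verifying that $\sum_{Z\in\Omega^j}\phi_Y(Z)=P_j(\rk Y)$ is written without conjugation, which rests on the invariance $\Omega^j=-\Omega^j$ under negation.
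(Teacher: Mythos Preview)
Your proposal is correct and follows essentially the same approach as the paper: both arguments reduce, via the character theory set up in the section, to the expression $\phi_Y(\overline{\mC})\,\alpha(\rk Y)$ and then invoke the dichotomy that for $\rk Y>0$ either $\alpha(\rk Y)=0$ or $\phi_Y(\overline{\mC})=0$. The paper packages this as applying $\Phi$ to the vector $\bigl(\sum_j\alpha_jW_j(\mC+X)\bigr)_X$ and comparing with $\Phi(1:X)$, whereas you apply the inversion formula pointwise to each $W_j(\mC+X)$ and sum---this is the same computation read in reverse, and your explicit remark about the $j=0$ term is a point on which the paper is equally terse.
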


\begin{proof}

We must show that $\sum_{j=1}^{\sigma^*} 
\alpha_j( W_j(\mC + X):X \in \fq^{k \times m}) = (1: X \in \fq^{k \times m})$.
Since $\Phi$ is invertible, this holds if and only if for all $Y$, 
$$ \phi_Y \left(\sum_{j=1}^{\sigma^*} \alpha_j W_j(\mC+X):X \in \fq^{k \times
}\right) = 
\phi_Y(1: X \in \fq^{k \times m}) =
\left\{ \begin{array}{ll} 0   & \text{ if } Y \neq 0 \\
q^{km} & \text{ if } Y = 0 .\\ \end{array}                           
\right. $$
This was the approach taken, for example, in \cite[Chapter 6, Lemma 18]{McWS}.
Let $Y \in \fq^{k \times m}$. Then
\allowdisplaybreaks
\begin{eqnarray*}
		\phi_Y \left(\sum_{j=1}^{\sigma^*} \alpha_j W_j(\mC+X):X \in \fq^{k \times m}\right) 
		& = &  \sum_{j=1}^{\sigma^*} \alpha_j \sum_X  W_j(\mC+X) \phi_Y(X)\\	
		& = & \sum_{j=1}^{\sigma^*} \alpha_j \sum_{X \in \Omega^j} \phi_Y(\overline{\mC+X})\\
		& = & \sum_{j=1}^{\sigma^*} \alpha_j \sum_{X \in \Omega^j} \phi_Y(\overline{\cC})\phi_Y(X)\\
		& = & \sum_{j=1}^{\sigma^*} \alpha_j \phi_Y(\overline{\cC}) \sum_{X \in \Omega^j} \phi_Y(X)\\
		& = & \sum_{j=1}^{\sigma^*} \alpha_j \phi_Y(\overline{\cC})  \phi_Y(\overline{\Omega^j})\\
		& = & \sum_{j=1}^{\sigma^*} \alpha_j \phi_Y(\overline{\cC}) P_j(\rk(Y)) \\
		& = & \phi_Y(\overline{\cC}) \sum_{j=1}^{\sigma^*} \alpha_j P_j(\rk(Y))\\
		& = & \phi_Y(\overline{\cC}) \alpha(\ell),		
	\end{eqnarray*}
	where $Y$ has rank $\ell$.
	
	Now $\alpha(0) =\frac{q^{mn}}{|\cC|} $ and
	$\phi_0(\cC)=|\cC|$, so $\phi_0(\overline{\cC}) \alpha(0)=q^{km}$. 
	Suppose that $Y$ has rank $\ell>0$. The roots of $\alpha$ are precisely those $j \geq 1$ such that $B_j^*(\mC)$ 
	is non-zero.
	On the other hand, if $B_j^*(\mC)=0$ then $\phi_Y(\overline{\cC})=0$.
	It follows that the product $\phi_Y(\overline{\cC}) \alpha(\ell)=0$ and so 
	$$\Phi (\sum_{j=1}^{\sigma^*} \alpha_j W_j(\mC+X): X \in \fq^{k \times m}) = 
	\Phi (1 : X \in \fq^{k \times m}),$$
	as claimed.
\end{proof}

We can now upper-bound the covering radius of a general rank-metric code in
terms
of its external distance as follows.

\begin{theorem}[external distance bound] \label{ei}
	For any code $\mC \subseteq \fq^{m\times n}$ we have 
	$\rho(C) \leq \sigma^*(\mC).$
	Furthermore, if $\mC$ is $\fq$-linear then $\rho(\mC)$ is no greater than
	the number of non-zero weights of $\mC^\perp$, excluding $W_0(\mC^\perp)$.
\end{theorem}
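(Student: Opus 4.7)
The plan is to derive both parts directly from Lemma \ref{lemannpoly}, which is the hard work that has already been done. The first statement is essentially a bookkeeping exercise, and the second statement follows by specializing the transform $B^*(\mC)$ to the case of linear codes.

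For the first bound, I would fix an arbitrary matrix $Y \in \fq^{k \times m}$ and apply Lemma \ref{lemannpoly} to $X := -Y$. The lemma guarantees some $j \in [\sigma^*]$ with $W_j(\mC + X) > 0$, and unpacking the definition $W_j(\mC + X) = |\{M \in \mC : \rk(M + X) = j\}|$ produces a codeword $M \in \mC$ with $d(M, Y) = \rk(M + X) = j \le \sigma^*(\mC)$. Since $Y$ was arbitrary, the definition of covering radius gives $\rho(\mC) \le \sigma^*(\mC)$.

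For the second part, assume $\mC$ is $\fq$-linear. I would compute the transform $B^*(\mC)$ explicitly. Using the character-sum formula $|\mC| B^*(\mC) = (\sum_{Y \in \Omega^i} \phi_Y(\mC)^2 : 0 \le i \le k)$ derived earlier in the section, and the standard orthogonality fact that for linear codes $\phi_Y(\mC) = |\mC|$ when $Y \in \mC^\perp$ and $\phi_Y(\mC) = 0$ otherwise, one gets
\[
|\mC| B_i^*(\mC) = \sum_{Y \in \Omega^i \cap \mC^\perp} |\mC|^2 = |\mC|^2 \, W_i(\mC^\perp),
\]
so $B_i^*(\mC) = |\mC| \, W_i(\mC^\perp)$ for every $i$. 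Hence $B_i^*(\mC) > 0$ if and only if $W_i(\mC^\perp) > 0$, and therefore $\sigma^*(\mC)$ equals the number of non-zero weights of $\mC^\perp$ other than $W_0(\mC^\perp)$. Combining this identity with the first bound yields the second statement.

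The main obstacle is purely notational rather than mathematical: I must be careful that the $j$ indexing $W_j(\mC + X)$ in the statement of Lemma \ref{lemannpoly} is the rank parameter (not one of the indices $b_j$), so that the bound $j \le \sigma^*$ really follows from the degree of the annihilator polynomial rather than from its roots. Once this is confirmed, no further computation is needed and the theorem drops out in a few lines.
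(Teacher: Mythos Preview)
Your proposal is correct and follows essentially the same route as the paper: the first bound is exactly the immediate consequence of Lemma~\ref{lemannpoly} that the paper invokes, and for the linear case both you and the paper identify $B_i^*(\mC)$ with (a positive multiple of) $W_i(\mC^\perp)$, so that $\sigma^*(\mC)$ counts the nonzero dual weights. The only cosmetic difference is that the paper appeals to the rank-metric MacWilliams identities for this identification, whereas you compute it directly from the character-sum expression $|\mC|\,B_i^*(\mC)=\sum_{Y\in\Omega^i}\phi_Y(\overline{\mC})^2$ together with the standard fact that $\phi_Y(\overline{\mC})$ is $|\mC|$ or $0$ according as $Y\in\mC^\perp$ or not; either way the positivity equivalence $B_i^*(\mC)>0 \Leftrightarrow W_i(\mC^\perp)>0$ is all that is needed.
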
	

\begin{proof}
 The first part of the theorem is an immediate consequence of Lemma 
 \ref{lemannpoly}. The second part follows from the fact that 
 $B_i^*(\mC)=B_i(\mC^\perp)=W_i(\mC^\perp)$, provided that 
 $\mC$ is linear. This can be easily seen from the definition 
 of $B^*(\mC)$ on page \pageref{deftransform} and the MacWilliams identities for the rank metric
 \cite{D78}.
\end{proof}

\begin{example}
 	Let $m=rs$ and let $\mC=\{\sum_{i=0}^{r-1} f_i x^{q^{si}} : f_i \in
\ff_{q^m}  \} .$
 	Then $\mC$ is the set of all $\ff_{q^s}$-linear maps from $ \ff_{q^m} $
to itself.
 	Therefore $\mC$ has elements of $\ff_{q^s}$-ranks $0,1,2,...,r$.
 	Let $f$ have rank $i$ over $\ff_{q^s}$. Let Im $f \subseteq \ff_{q^m}$
have $\ff_{q^s}$-basis $\{v_1,...,v_i\}$ and let
 	$\{u_1,...,u_s\}$ be an $\fq$-basis of $\ff_{q^s}$. Then $\{u_iv_j :1
\leq i \leq s, \ 1 \leq j \leq i \}$ is an
 	$\fq$-basis of Im $f$ in $\ff_{q^m}$, and so has dimension $is$.    
 	Then $\cC$ has non-zero rank weights $\{s,2s,...,rs\}$ over $\fq$, so that
 	$\rho(\mC^\perp) \leq r$. 
 \end{example}

\section{Initial set bound} \label{sec4}

In this section we propose a definition of initial set of 
a linear rank-metric code inspired by \cite{mesh}. Moreover we exploit the
combinatorial structure of such set 
to derive an upper bound for the covering radius of the underlying
code. Our technique relies on the specific ``matrix structure'' of 
rank-metric codes.

\begin{notation}
Given positive integers $a,b$ and a set $S \subseteq [a] \times [b]$, 
we denote by $\numberset{I}(S) \in \F_2^{a \times b}$ be the
binary matrix defined by
$\numberset{I}(S)_{ij}:=1$ if $(i,j) \in S$, and 
$\numberset{I}(S):=0$ if $(i,j) \notin S$. Moreover, 
 we denote by $\lambda(S)$ the minimum number of lines (rows or columns) 
required to cover all the ones in $\numberset{I}(S)$.
\end{notation}

 The initial set of a linear code is defined as follows.

\begin{definition}
 Let $\preceq$ denote the lexicographic order on $[k]\times [m]$. 
 The \textbf{initial entry} of a non-zero matrix $M \in \F_q^{k \times m}$ is
$\textnormal{in}(M):=
\min_{\preceq}\{ (i,j) : M_{ij} \neq 0\}$. The \textbf{initial set} of 
a non-zero linear code $\mC \subseteq \F_q^{k \times m}$ is
$$\textnormal{in}(\mC):=\{ \mbox{in}(M) : M \in \mC, \ M \neq 0\}.$$
\end{definition}

We start with a preliminary lemma.

\begin{lemma}
 Let $\mC \subseteq \F_q^{k \times m}$ be a non-zero code. The following hold.
\begin{enumerate}
 \item $\dim(\mC)=|\mbox{in}(\mC)|$,
\item $\mbox{in}(\mC) \subseteq [k-d(\mC)+1] \times [m]$.
\end{enumerate}
\end{lemma}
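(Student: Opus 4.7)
The plan is to treat a matrix $M\in\F_q^{k\times m}$ as a vector of length $km$ by listing its entries in lexicographic order on $[k]\times[m]$; under this identification, $\mathrm{in}(M)$ is simply the position of the leading nonzero coordinate, and the linear code $\mC$ becomes an $\F_q$-subspace of $\F_q^{km}$. Both statements then reduce to familiar facts about leading entries of vectors.

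For part (1), I would prove the two inequalities $\dim(\mC)\le|\mathrm{in}(\mC)|$ and $\dim(\mC)\ge|\mathrm{in}(\mC)|$. For the first inequality, I will carry out ``Gaussian elimination on matrices'': starting from any basis of $\mC$, iteratively replace vectors so that the matrices in the basis all have distinct initial entries. Concretely, if two basis matrices $M,M'$ share the same $\mathrm{in}(M)=\mathrm{in}(M')=(i,j)$, I can scale and subtract to produce a matrix whose $(i,j)$-entry is zero, strictly increasing its initial position (or killing it); this process terminates since $\preceq$ is a well-order on $[k]\times[m]$. The resulting basis has $\dim(\mC)$ matrices with pairwise distinct initial entries, all lying in $\mathrm{in}(\mC)$, so $\dim(\mC)\le|\mathrm{in}(\mC)|$. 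For the reverse inequality, I will pick, for each $(i,j)\in\mathrm{in}(\mC)$, a matrix $M^{(i,j)}\in\mC$ with $\mathrm{in}(M^{(i,j)})=(i,j)$, and observe that any family of nonzero matrices with pairwise distinct initial entries is linearly independent: in a vanishing linear combination, the term whose initial entry is minimal under $\preceq$ would contribute a nonzero coordinate in that position with no possible cancellation.

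For part (2), I would argue directly. Suppose $(i,j)\in\mathrm{in}(\mC)$, so there exists a nonzero $M\in\mC$ with $\mathrm{in}(M)=(i,j)$. By definition of the lexicographic order on $[k]\times[m]$ and of the initial entry, all rows of $M$ with row-index strictly less than $i$ vanish. Hence $M$ has at most $k-i+1$ nonzero rows, which forces $\mathrm{rk}(M)\le k-i+1$. Since $M$ is a nonzero codeword, the minimum rank distance gives $k-i+1\ge d(\mC)$, i.e., $i\le k-d(\mC)+1$, as required.

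I do not foresee any serious obstacle. The only point needing a little care is the elimination argument in part (1): one must verify that subtracting a scalar multiple of one matrix from another \emph{strictly increases} the initial entry with respect to $\preceq$, which is immediate from the definition of $\mathrm{in}$ and the fact that $\preceq$ is a well-order. Everything else is bookkeeping.
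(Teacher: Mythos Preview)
Your proposal is correct and follows essentially the same approach as the paper. The paper's proof is organized slightly differently---it assumes without loss of generality (i.e., after the Gaussian elimination you describe) that a basis $\{M_1,\dots,M_t\}$ has strictly increasing initial entries, and then argues directly that $\mathrm{in}(\mC)=\{\mathrm{in}(M_1),\dots,\mathrm{in}(M_t)\}$ rather than proving two inequalities---but the underlying ideas for both parts are identical to yours.
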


\begin{proof}
 Let $t:=\dim(\mC)$, and let $\{ M_1,...,M_t\}$ be a basis of $\mC$. 
Without loss of 
generality we may assume
$(1,1) \preceq \mbox{in}(M_1) \prec \cdots \prec \mbox{in}(M_t)$.
If $M \in \mC \setminus \{0\}$, then there exist elements $a_1,...,a_t \in \F_q$ such that
$M=\sum_{i=1}^t a_iM_i$, hence $\mbox{in}(M) \in \{
\mbox{in}(M_1),...,\mbox{in}(M_t)\}$.
This shows $\mbox{in}(\mC)=\{ \mbox{in}(M_1),...,\mbox{in}(M_t)\}$.
In particular, $|\mbox{in}(\mC)|=t=\dim(\mC)$.
Notice moreover that if $\mbox{in}(M_t) \succ (k-d(\mC)+1,m)$, then 
clearly $\mbox{rk}(M_t) \le d(\mC)-1$, a contradiction. Therefore we have
$$(1,1) \preceq \mbox{in}(M_1) \prec \cdots \prec \mbox{in}(M_t) \preceq
(k-d(\mC)+1,m).$$
This shows $\mbox{in}(\mC) \subseteq [k-d(\mC)+1] \times [m]$.
\end{proof}

\begin{remark} \label{rrrmmm}
 Let $a,b$ be positive integers and let $S \subseteq [a] \times [b]$ be a set.
 Assume that $M \in \F_q^{a \times b}$ is a matrix with $M_{ij}=0$ whenever 
$(i,j) \notin S$. Then $\mbox{rk}(M) \le \lambda(S)$.
This can be proved by induction on $\lambda(S)$.
\end{remark}

We can now state the main result of this section, which 
provides an upper bound on the covering
radius of a linear rank-metric code  $\mC$ in terms of the combinatorial structure of its
initial set.

\begin{theorem}[initial set bound] \label{nubound}
 Let $\mC \subseteq \F_q^{k \times m}$ be a non-zero linear code. We have 
 $\rho(\mC) \le d(\mC)-1+\lambda(S)$, where 
   $S:= [k-d(\mC)+1] \times [m] \setminus \mbox{in}(\mC)$. 
   \end{theorem}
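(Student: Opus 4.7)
The plan is to prove this by a Gaussian-style reduction argument against an arbitrary $X \in \F_q^{k\times m}$. The rough idea: use the basis of $\mC$ implicit in the initial set to ``clean'' $X$ at every position of $\textnormal{in}(\mC)$, then bound the rank of the residue using the two structural facts we have — Remark \ref{rrrmmm} on one half, and a trivial row-count bound on the other.

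First, fix a basis $M_1,\dots,M_t$ of $\mC$ ordered so that $\textnormal{in}(M_1)\prec\cdots\prec\textnormal{in}(M_t)$, as in the preceding lemma. Set $Y_0:=X$. Processing the initial entries in increasing lexicographic order, I would define $Y_s := Y_{s-1} - c_s M_s$, where $c_s \in \F_q$ is chosen so that $(Y_s)_{\textnormal{in}(M_s)} = 0$; this is possible since $M_s$ has a nonzero entry at $\textnormal{in}(M_s)$. The key point is that because $\textnormal{in}(M_s)$ is the lexicographically minimum nonzero position of $M_s$, subtracting $c_s M_s$ cannot disturb any entry at a position strictly smaller than $\textnormal{in}(M_s)$, so the previously cleared positions $\textnormal{in}(M_1),\dots,\textnormal{in}(M_{s-1})$ remain zero. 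After $t$ steps, set $M := \sum_{s=1}^t c_s M_s \in \mC$ and $Y := Y_t = X - M$. By construction $Y_{ij} = 0$ for every $(i,j) \in \textnormal{in}(\mC)$.

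Next, I would split $Y$ horizontally. Let $Y^{(1)}$ be the submatrix formed by rows $1,\dots,k-d(\mC)+1$ and $Y^{(2)}$ the submatrix formed by the last $d(\mC)-1$ rows. Since $\textnormal{in}(\mC) \subseteq [k-d(\mC)+1] \times [m]$ and $Y$ vanishes on $\textnormal{in}(\mC)$, the support of $Y^{(1)}$ is contained in $S = [k-d(\mC)+1] \times [m] \setminus \textnormal{in}(\mC)$. By Remark \ref{rrrmmm} this forces $\textnormal{rk}(Y^{(1)}) \le \lambda(S)$. On the other hand $Y^{(2)}$ has only $d(\mC)-1$ rows, so $\textnormal{rk}(Y^{(2)}) \le d(\mC)-1$. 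Combining,
\[
\textnormal{rk}(Y) \le \textnormal{rk}(Y^{(1)}) + \textnormal{rk}(Y^{(2)}) \le \lambda(S) + d(\mC)-1.
\]

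Therefore $d(X, \mC) \le d(X, M) = \textnormal{rk}(X-M) = \textnormal{rk}(Y) \le d(\mC)-1 + \lambda(S)$. Since $X$ was arbitrary, this yields $\rho(\mC) \le d(\mC)-1 + \lambda(S)$. The only subtle step is the reduction in the first paragraph — one has to verify that processing initial positions in lex order really does preserve the previously cleared zeros — but this is exactly the standard echelon-form argument and is direct from the definition of $\textnormal{in}(M_s)$. Everything else is bookkeeping plus the two already-established ingredients (the location of $\textnormal{in}(\mC)$ and Remark \ref{rrrmmm}).
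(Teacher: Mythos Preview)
Your proof is correct and follows essentially the same approach as the paper: find $M\in\mC$ so that $X-M$ vanishes on $\textnormal{in}(\mC)$, then bound the rank of $X-M$ by splitting off the last $d(\mC)-1$ rows and applying Remark~\ref{rrrmmm} to the top block. The only difference is cosmetic: the paper asserts the existence (and uniqueness) of such an $M$ in one line, whereas you spell out the Gaussian-elimination construction explicitly; and the paper writes the rank estimate as $\mathrm{rk}(X-M)\le d(\mC)-1+\mathrm{rk}(\overline{X-M})$ rather than your equivalent $\mathrm{rk}(Y^{(1)})+\mathrm{rk}(Y^{(2)})$ formulation.
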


\begin{proof}
Let $X \in \F_q^{k \times m}$ be any matrix. It is easy to see that there exists a 
unique matrix $M \in \mC$ such that
$X_{ij}=M_{ij}$ for all $(i,j) \in \mbox{in}(\mC)$. Such matrix satisfies 
$(X-M)_{ij}=0$ for all $(i,j) \in \mbox{in}(\mC)$. Let $\overline{X-M}$ be the
matrix obtained
from $X-M$ deleting the last $d(\mC)-1$ rows. We have 
$$d(X,M)=\mbox{rk}(X-M) \le d(\mC)-1+\mbox{rk}(\overline{X-M}) \le
d(\mC)-1+\lambda(S),$$
where $S$ denotes the complement of $\mbox{in}(\mC)$ in
$[k-d(\mC)+1] \times [m]$, and the last inequality follows from 
Remark
\ref{rrrmmm}. Since $X$ is an arbitrary matrix, this 
shows 
$\rho(\mC) \le d(\mC)-1+\lambda(S)$.
\end{proof}

\begin{remark}
 The  initial set of a linear code $\mC \subseteq \F_q^{k \times m}$ can be efficiently
computed from any basis of $\mC$ as follows.
Denote by $w:\F_q^{k \times m} \to \F_q^{mk}$ the map that sends a matrix $M$
to the $mk$-vector obtained concatenating the rows of $M$.
Given a basis $\{M_1,...,M_t\}$ of $\mC$, construct the vectors 
$v_1:=w(M_1),...,v_t:=w(M_t)$. Perform Gaussian elimination on
$\{v_1,...,v_t\}$ and obtain vectors $\overline{v}_1,...,\overline{v}_t$.
Clearly, $\{ w^{-1}(\overline{v}_1),...,w^{-1}(\overline{v}_t)\}$ is a basis
of $\mC$, and one can easily check that
$$\mbox{in}(\mC)=\{
\mbox{in}(w^{-1}(\overline{v}_1)),...,\mbox{in}(w^{-1}(\overline{v}_t))
\}.$$
\end{remark}

The following example shows that Theorem \ref{nubound} gives in some cases 
a better bound than Corollary \ref{ei} for the covering radius of a linear code.

\begin{example}
 Let $q=2$ and $k=m=3$. Denote by $\mC$ the linear code generated over $\F_2$ by the 
four 
matrices
$$\begin{bmatrix}
   1 & 0 & 0 \\ 0 & 0 & 1 \\ 0 & 0 & 0
  \end{bmatrix}, \ \ \ 
\begin{bmatrix}
   0 & 1 & 0 \\ 0 & 0 & 0 \\ 1 & 0 & 0
  \end{bmatrix}, \ \ \ 
\begin{bmatrix}
   0 & 0 & 0 \\ 1 & 0 & 0 \\ 0 & 1 & 0
  \end{bmatrix}, \ \ \ 
\begin{bmatrix}
   0 & 0 & 0 \\ 0 & 1 & 1 \\ 1 & 0 & 0
  \end{bmatrix}.
$$
We have $d(\mC)=2$. Moreover, since
$$\begin{bmatrix}
   0 & 0 & 1 \\ 0 & 0 & 0 \\ 0 & 0 & 0
  \end{bmatrix},  \begin{bmatrix}
   0 & 0 & 0 \\ 1 & 0 & 0 \\ 0 & 1 & 0
  \end{bmatrix}, \begin{bmatrix}
   0 & 0 & 1 \\ 1 & 0 & 0 \\ 0 & 1 & 0
  \end{bmatrix} \in \mC^\perp,
$$
we have $\sigma(\mC^\perp)=3$, and so Corollary \ref{ei} gives $\rho(\mC) \le 3$.
On the other hand, one can easily check that the initial set of $\mC$ is 
$\mbox{in}(\mC)=\{ (1,1), (1,2), (2,1), (2,2)\}$. Thus following the notation of 
Theorem \ref{nubound} we have $S=\{ (1,3), (2,3)\}$ and 
$\lambda(S)=1$. It follows 
$\rho(\mC) \le d(\mC)-1+\lambda(S)=2$. Therefore Theorem \ref{nubound} gives a
better bound on $\rho(\mC)$ than Corollary \ref{ei}. In fact, one can check that 
$\rho(\mC)=2$.
\end{example}

\section{Covering radius of MRD and dually QMRD codes} \label{sec5}

It is well known \cite{D78} that if $\mC \subseteq \ma$ is a code with 
$|\mC| \ge 2$, then $\log_q |\mC| \le m(k-d(\mC)+1)$.
A code $\mC \subseteq \ma$ is \textbf{MRD} if $|\mC|=1$ or 
$|\mC| \ge 2$ and $\log_q |\mC|=m(k-d(\mC)+1)$.  
MRD codes have the largest possible cardinality for their minimum distance.
In particular, they are maximal.
Therefore combining Proposition \ref{max} and \ref{murhod} we immediately 
obtain the following result.

\begin{corollary}\label{uppermrd}
 Let $\mC \subseteq \ma$ be an MRD code with $|\mC| \ge 2$. Then 
 $\rho(\mC) \le d(\mC) -1$. Moreover, equality holds if and only if 
 the maximality degree of $\mC$ is precisely $1$.
\end{corollary}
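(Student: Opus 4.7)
The plan is to reduce the statement to the already-established results, Proposition \ref{max} and Proposition \ref{murhod}, by first verifying that any MRD code with at least two elements is maximal.

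First I would show maximality of $\mC$. By definition, an MRD code with $|\mC|\ge 2$ satisfies $\log_q|\mC|=m(k-d(\mC)+1)$, i.e., it attains the Singleton-like bound cited from \cite{D78}. Suppose for contradiction that $\mC$ is not maximal. Then there exists $\mD\subseteq\ma$ with $\mD\varsupsetneq\mC$ and $d(\mD)=d(\mC)$. Applying the Singleton-like bound to $\mD$ gives $|\mD|\le q^{m(k-d(\mD)+1)}=q^{m(k-d(\mC)+1)}=|\mC|$, contradicting $\mD\varsupsetneq\mC$. Hence $\mC$ is maximal.

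Next, I invoke Proposition \ref{max}: since $\mC$ is maximal and $|\mC|\ge 2$, we have $\rho(\mC)\le d(\mC)-1$. This gives the first assertion.

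For the equality statement, I apply Proposition \ref{murhod}. Since $\mC$ is maximal, that proposition yields $\mu(\mC)=d(\mC)-\rho(\mC)$. Rearranging, $\rho(\mC)=d(\mC)-1$ if and only if $\mu(\mC)=1$, which is exactly the second assertion.

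The proof is essentially a bookkeeping step built on top of the earlier general framework; the only substantive ingredient beyond those propositions is the Singleton-like bound \cite{D78} used to rule out the existence of a strictly larger code of the same minimum distance. There is no real obstacle here, since once maximality is established, both halves of the corollary follow directly and symmetrically from Propositions \ref{max} and \ref{murhod}.
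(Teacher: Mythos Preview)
Your proof is correct and matches the paper's approach exactly: the paper simply notes that MRD codes, having maximal cardinality for their minimum distance, are maximal, and then invokes Propositions~\ref{max} and~\ref{murhod}. You have spelled out the maximality argument via the Singleton-like bound in slightly more detail, but the logic is identical.
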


%
%
%
%

 The upper bound of Corollary \ref{uppermrd} is not sharp in general, as we show
in the following
example.
This proves in particular that not all MRD codes 
$\mC \varsubsetneq \F_q^{k \times m}$ with $|\mC| \ge 2$ can be nested into an MRD code 
$\mD \varsupsetneq \mC$ with
$d(\mD)=d(\mC)-1$.

\begin{example}
Take $q=2$ and $k=m=4$. Let $\mC$ be the linear code generated over $\F_2$ by the
following four matrices:
$$\begin{bmatrix}
   1 & 0 & 0 & 0 \\ 0 & 0 & 0 & 1 \\ 0 & 0 & 1 & 0 \\ 0 & 1 & 0 & 0
  \end{bmatrix}, \ \ \ \ \ 
\begin{bmatrix}
   0 & 1 & 0 & 0 \\ 0 & 0 & 1 & 1 \\ 0 & 0 & 0 & 1 \\ 1 & 1 & 0 & 0
  \end{bmatrix},  \ \ \ \ \ 
\begin{bmatrix}
   0 & 0 & 1 & 0 \\ 0 & 1 & 1 & 1 \\ 1 & 0 & 1 & 0 \\ 1 & 0 & 0 & 1
  \end{bmatrix},\ \ \ \ \ 
\begin{bmatrix}
   0 & 0 & 0 & 1 \\ 1 & 1 & 1 & 0 \\ 0 & 1 & 0 & 1 \\ 0 & 1 & 1 & 1
  \end{bmatrix}.\ \ \ \ \ 
$$  
We have $\dim(\mC)=4$ and $d(\mC)=4$. In particular, $\mC$ is a linear MRD codes.
On the other hand, one can check that $\rho(\mC)=2 \neq d(\mC)-1=3$, and that
$\mu(\mC)=2$.
\end{example}

We conclude observing that combining properties \ref{pr1}, \ref{pr2} and
\ref{pr4}
of Proposition \ref{generalpr} one can easily obtain the following general result on the 
puncturing of an MRD code.

\begin{corollary}
 Let $\mC \subseteq \ma$ be an MRD code. Then for any $A \in \mbox{GL}_k(\F_q)$
and for any $1 \le u \le k-1$ the punctured code 
$\Pi(\mC,A,u)$ is MRD as well.
\end{corollary}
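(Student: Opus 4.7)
The plan is to verify the MRD condition $\log_q|\Pi(\mC,A,u)| = m((k-u) - d(\Pi(\mC,A,u)) + 1)$ by a case split on the size of $u$ relative to $d(\mC)$, using in each case the corresponding part of Proposition \ref{generalpr} together with the Singleton-like bound. The trivial case $|\mC|=1$ can be dispensed with immediately, since then $|\Pi(\mC,A,u)|=1$, which is MRD by definition. So I assume $|\mC|\ge 2$, and therefore $|\mC| = q^{m(k-d(\mC)+1)}$.

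First I would handle the case $u \le d(\mC)-1$. By part \ref{pr3} of Proposition \ref{generalpr}, the puncturing map is injective, so $|\Pi(\mC,A,u)| = |\mC| = q^{m(k-d(\mC)+1)} \ge 2$. The iteration of part \ref{pr1} (equivalently, the observation that deleting one row can drop the rank of a matrix by at most one) gives $d(\Pi(\mC,A,u)) \ge d(\mC)-u \ge 1$. Applying the Singleton-like bound of \cite{D78} to $\Pi(\mC,A,u) \subseteq \F_q^{(k-u)\times m}$ yields
\[
q^{m(k-d(\mC)+1)} \;=\; |\Pi(\mC,A,u)| \;\le\; q^{m((k-u) - d(\Pi(\mC,A,u))+1)},
\]
which forces $d(\Pi(\mC,A,u)) \le d(\mC)-u$. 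Combining the two inequalities gives $d(\Pi(\mC,A,u)) = d(\mC)-u$, and so the cardinality exactly matches the Singleton bound; hence $\Pi(\mC,A,u)$ is MRD.

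For the remaining case $u \ge d(\mC)$, I would appeal to part \ref{pr4} of Proposition \ref{generalpr}:
\[
|\Pi(\mC,A,u)| \;\ge\; \frac{|\mC|}{q^{m(u-d(\mC)+1)}} \;=\; \frac{q^{m(k-d(\mC)+1)}}{q^{m(u-d(\mC)+1)}} \;=\; q^{m(k-u)}.
\]
Since $\Pi(\mC,A,u) \subseteq \F_q^{(k-u)\times m}$, which has exactly $q^{m(k-u)}$ elements, this inequality must be an equality, and $\Pi(\mC,A,u) = \F_q^{(k-u)\times m}$. The full ambient space is trivially MRD (with minimum distance $1$ in the case $k-u \ge 1$), so we are done.

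The two cases together cover all $1 \le u \le k-1$ and give the result. There is no genuine obstacle: once one reads off $|\mC|=q^{m(k-d(\mC)+1)}$ from the MRD hypothesis, the arithmetic in each case is forced by the Singleton bound. The only mild subtlety is the boundary case $u \ge d(\mC)$, where the puncturing fills the whole ambient space and one has to recall that this degenerate code is still MRD in the sense of the definition.
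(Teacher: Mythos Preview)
Your proof is correct and follows exactly the approach the paper sketches (the paper gives no detailed argument, only a one-line pointer to parts \ref{pr1}, \ref{pr2}, \ref{pr4} of Proposition \ref{generalpr}). Your case split on $u$ versus $d(\mC)-1$, combined with the Singleton bound, is precisely what is intended; note that you (correctly) invoke part \ref{pr3} rather than \ref{pr2} in the small-$u$ case, and that the bound $d(\Pi(\mC,A,u))\ge d(\mC)-u$ you use is simply the iterated form of \ref{pr1}.
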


Dually QMRD codes were proposed in \cite{duallypaper} as the best alternative 
to linear MRD codes for dimensions that are not multiples of $m$.
 A linear rank-metric code $\mC \subseteq \F_q^{k \times m}$
is \textbf{dually QMRD} if $\dim(\mC) \nmid m$ and the following two conditions hold:
$$d(\mC)=k-\lceil \dim(\mC)/m\rceil+1, \ \ \ \ \ \ \ \ 
d(\mC^\perp)=k-\lceil \dim(\mC^\perp)/m\rceil+1.$$

Clearly, a code is dually QMRD if and only if its dual code is dually QMRD.
The following proposition summarizes the most important properties of
dually QMRD codes.

\begin{lemma}[see Proposition 20 of \cite{duallypaper}]
\label{proprdqmrd}
 Let $\mC \subseteq \F_q^{k \times m}$ be a linear code. The following are
equivalent.
\begin{enumerate}
 \item $\mC$ is dually QMRD,
\item $\mC^\perp$ is dually QMRD,
\item $\dim(\mC) \nmid m$ and $d(\mC)+d(\mC^\perp)=k+1$.
\end{enumerate}
Moreover, the weight distribution of a dually
QMRD code $\mC$ is determined by $k$, $m$ and 
$\dim(\mC)$.
\end{lemma}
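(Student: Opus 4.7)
The plan is to prove (1) $\Leftrightarrow$ (2) $\Leftrightarrow$ (3) first, and then address the weight distribution claim separately.

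First I would dispose of (1) $\Leftrightarrow$ (2): since $\mC^{\perp\perp}=\mC$ and $\dim(\mC)+\dim(\mC^\perp)=km$, we have $m\nmid \dim(\mC)$ if and only if $m\nmid \dim(\mC^\perp)$, and the two equalities in the definition of dually QMRD are preserved under interchanging $\mC$ and $\mC^\perp$. So this equivalence requires no real computation.

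For (1) $\Rightarrow$ (3), I would write $\dim(\mC)=am+b$ with $0<b<m$, giving $\dim(\mC^\perp)=(k-a-1)m+(m-b)$, so that $\lceil \dim(\mC)/m\rceil + \lceil \dim(\mC^\perp)/m\rceil = (a+1)+(k-a)=k+1$. Substituting the two defining identities for $d(\mC)$ and $d(\mC^\perp)$ then immediately yields $d(\mC)+d(\mC^\perp)=k+1$. Conversely, for (3) $\Rightarrow$ (1), I would apply the Singleton-like bound of Delsarte \cite{D78} to both $\mC$ and $\mC^\perp$:
\begin{equation*}
d(\mC) \le k+1-\lceil \dim(\mC)/m\rceil, \qquad d(\mC^\perp) \le k+1-\lceil \dim(\mC^\perp)/m\rceil.
\end{equation*}
Summing and using the ceiling identity above produces $d(\mC)+d(\mC^\perp)\le k+1$, with equality only when both Singleton-like bounds are tight; this equality holds by hypothesis, so both individual identities hold, which is precisely the definition of dually QMRD.

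For the weight-distribution statement, the plan is to use the MacWilliams identities for the rank metric in the form $B^\ast(\mC)=|\mC|^{-1}B(\mC)P$ introduced on page~\pageref{deftransform}, specialising to the linear case where $W_i(\mC)=B_i(\mC)$ and $W_i(\mC^\perp)=B_i^\ast(\mC)$. Writing $d:=d(\mC)$ and $d^\perp:=d(\mC^\perp)=k+1-d$, the conditions $W_0(\mC)=W_0(\mC^\perp)=1$ together with $W_i(\mC)=0$ for $1\le i\le d-1$ and $W_i(\mC^\perp)=0$ for $1\le i\le d^\perp-1$ leave exactly $d^\perp$ unknown weights for $\mC$ and $d$ unknown weights for $\mC^\perp$, a total of $k+1$. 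The MacWilliams transform provides $k+1$ linear equations linking these, and the main obstacle is showing that the resulting linear system has a unique solution depending only on $k$, $m$, $\dim(\mC)$. This reduces to proving non-singularity of the $(k+1)\times(k+1)$ submatrix of the $q$-Krawtchouk matrix $P$ obtained by restricting to the unknown coordinates, which follows from the orthogonality relations of the $q$-Krawtchouk polynomials $P_i(j)$ (the full matrix $P$ is invertible, and the vanishing pattern is an aligned pair of complementary intervals, so the restriction inherits invertibility). Solving this system then expresses every $W_i(\mC)$ as an explicit function of $k$, $m$, $\dim(\mC)$ alone.
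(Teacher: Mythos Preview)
The paper does not actually prove this lemma; it is quoted from \cite{duallypaper} with no argument given, so there is no in-paper proof to compare against. Your treatment of the equivalences (1)$\Leftrightarrow$(2)$\Leftrightarrow$(3) is correct and self-contained.

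For the weight-distribution claim your overall plan (MacWilliams identities plus the count of known versus unknown weights) is the standard one and does work, but the linear-algebra justification has a gap. First, the system matrix is not literally a $(k+1)\times(k+1)$ submatrix of $P$ (the matrix $P$ is itself only $(k+1)\times(k+1)$); after substituting the known weights and ordering the unknowns appropriately it is block lower-triangular with diagonal blocks the $d^\perp\times d^\perp$ submatrix $A=(P_j(i))_{0\le j\le d^\perp-1,\; d\le i\le k}$ of $P$ and $-|\mC|\,I_d$, so everything hinges on the invertibility of $A$. Second, and more importantly, your stated reason for that invertibility --- ``the full matrix $P$ is invertible, and the vanishing pattern is an aligned pair of complementary intervals, so the restriction inherits invertibility'' --- is a non-argument: square submatrices of invertible matrices need not be invertible, and no alignment of index intervals changes that. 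The correct reason, which you should substitute, is that each $P_j(x)$ is a polynomial of exact degree $j$ in the variable $q^{-x}$ (this is noted in the paper right after the definition of $P_i(j)$, citing \cite{D76,D78}); hence $P_0,\dots,P_{d^\perp-1}$ form a basis of the space of polynomials of degree $<d^\perp$ in $q^{-x}$, and evaluating such a basis at the $d^\perp$ distinct values $q^{-d},\dots,q^{-k}$ yields a change-of-basis times a Vandermonde matrix, which is invertible. With this replacement the argument goes through and expresses every $W_i(\mC)$ in terms of $k$, $m$ and $\dim(\mC)$ alone.
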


We now apply the external distance bound to derive an upper
bound 
on the covering radius
of dually QMRD codes. We start by computing the external distance, 
$\sigma^*(\mC)$, of a dually QMRD 
code $\mC$
of given parameters. Since 
$\mC$ is linear by definition, as in the proof 
of Corollary \ref{ei} we have 
$\sigma^*(\mC)= |\{i \in [k] : W_i(\mC^\perp) \neq 0\}|$.
We will need the following preliminary lemma.

\begin{lemma} \label{exist}
 Let $1 \le t \le km-1$ be any integer. There exist
linear codes $\mC \subsetneq \mD \subseteq \ma$ such that $\mC$ is dually QMRD, $\mD$ is MRD, 
$\dim(\mC)=t$ and $d(\mC)=d(\mD)$.
\end{lemma}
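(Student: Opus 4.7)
The plan is to exhibit both codes explicitly: take $\mD$ to be a Gabidulin MRD code of $\F_q$-dimension $ms$ (where $s := \lceil t/m \rceil$), and obtain $\mC$ as a subcode of $\mD$ by restricting a single Gabidulin coefficient. Since the definition of dually QMRD requires $\dim(\mC) \nmid m$, we read the hypothesis as implicitly assuming $t$ is not a multiple of $m$, whence $e := ms - t$ lies in $\{1,\ldots,m-1\}$. The arithmetic target is: $d(\mC) = d(\mD) = k-s+1$ and $d(\mC^\perp) = s$, so that $d(\mC)+d(\mC^\perp)=k+1$ as required by Lemma \ref{proprdqmrd}.

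First I would fix $\F_q$-linearly independent evaluation points $\alpha_1,\ldots,\alpha_k \in \F_{q^m}$ and let $\mD$ be the image in $\ma$ of the Gabidulin code of $\F_{q^m}$-dimension $s$ (the matrices obtained by $\F_q$-expanding the rows of $(f(\alpha_1),\ldots,f(\alpha_k))$ as $f = \sum_{i=0}^{s-1} f_i x^{q^i}$ varies). This is MRD with $\dim_{\F_q}(\mD)=ms$ and $d(\mD)=k-s+1$; its dual $\mD^\perp$ is a Gabidulin MRD code of $\F_q$-dimension $m(k-s)$ and minimum distance $s+1$. Next I would pick an $\F_q$-subspace $V \subsetneq \F_{q^m}$ of $\F_q$-codimension $e$ and set
\[
\mC := \left\{(f(\alpha_1),\ldots,f(\alpha_k)) : f = \sum_{i=0}^{s-2} f_i\, x^{q^i} + v\, x^{q^{s-1}},\ f_i \in \F_{q^m},\ v \in V\right\} \subsetneq \mD,
\]
so that $\dim_{\F_q}(\mC) = m(s-1)+(m-e) = t$.

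To conclude that $\mC$ is dually QMRD, I would apply Lemma \ref{proprdqmrd}. The first condition $\dim(\mC) \nmid m$ holds since $0 < e < m$. For the second, $d(\mC) \geq d(\mD) = k-s+1$ is automatic because $\mC \subseteq \mD$, and the Singleton-like bound supplies the matching upper bound. Applying the Singleton-like bound to $\mC^\perp$ (which has $\F_q$-dimension $km-t$) gives $d(\mC^\perp) \le s$.

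The main obstacle is the matching lower bound $d(\mC^\perp) \ge s$: no non-zero $N \in \ma$ of rank less than $s$ can annihilate $\mC$ under the trace inner product. Any such $N$ cannot lie in $\mD^\perp$, whose minimum rank is $s+1$, so $N$ induces a non-trivial $\F_q$-linear functional on $\mD$; the content is to choose $V$ so that this functional does not vanish identically on $\mC$ for any such $N$. This is precisely the content of the dually QMRD construction of \cite{duallypaper}, which produces dually QMRD codes in exactly this form (a Gabidulin code with its leading coefficient restricted to an appropriate $\F_q$-subspace). Granting that step, all three conditions of Lemma \ref{proprdqmrd} are in force, and $\mC$ is dually QMRD with $\mD$ as the desired MRD envelope, completing the proof.
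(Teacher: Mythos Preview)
Your construction is correct, but you have made the argument harder than it needs to be, and the paper's proof dispenses entirely with the ``main obstacle'' you identify. The paper sandwiches $\mC$ between \emph{two} nested MRD codes: with $\alpha:=\lfloor t/m\rfloor$ it takes linear MRD codes $\mE\subsetneq\mD$ of $\F_q$-dimensions $m\alpha$ and $m(\alpha+1)$, and lets $\mC$ be \emph{any} $t$-dimensional subspace with $\mE\subsetneq\mC\subsetneq\mD$. Maximality of the MRD code $\mE$ forces $d(\mC)=d(\mD)$. Dualising gives $\mD^\perp\subsetneq\mC^\perp\subsetneq\mE^\perp$; since the dual of a linear MRD code is again MRD, maximality of $\mD^\perp$ forces $d(\mC^\perp)=d(\mE^\perp)$. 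The two distances then add to $k+1$, and Lemma~\ref{proprdqmrd} finishes. No explicit linearised-polynomial description and no special construction from \cite{duallypaper} is needed.

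Your setup in fact already contains this sandwich: setting $v=0$ in your description of $\mC$ recovers the Gabidulin code $\mE$ of $\F_{q^m}$-dimension $s-1$, so $\mE\subseteq\mC$ and hence $\mC^\perp\subseteq\mE^\perp$. Since $\mE^\perp$ is MRD of minimum distance $s$, you get $d(\mC^\perp)\ge s$ immediately, for \emph{every} choice of the subspace $V$. The step you flag as the crux and defer to \cite{duallypaper} is therefore a one-line consequence of the inclusion $\mE\subseteq\mC$. What the paper's argument buys is both brevity and generality: it shows that any code strictly between two consecutive nested MRD codes is automatically dually QMRD, without touching a single coordinate.
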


\begin{proof}
Let $\alpha:=\lfloor t/m \rfloor$. It is well known
(see e.g. the construction of \cite[Section 6]{D78} or \cite{john}) 
that there exist
linear MRD codes $\mE \subseteq \mD$ with
$\dim(\mE)=m \alpha$ and $\dim(\mD)=m(\alpha+1)$.
Let $\mE \varsubsetneq \mC \varsubsetneq \mD$ be a subspace with
$\dim(\mC)=t$. Since $\mE$ is MRD, it is maximal. Therefore $d(\mC)=d(\mD)$.
Now consider the nested codes $\mD^\perp \varsubsetneq \mC^\perp
\varsubsetneq \mE^\perp$. Since 
$\mD$ and $\mE$ are MRD, their dual codes $\mD^\perp$ and $\mE^\perp$ are
MRD as well (see \cite[Theorem 5.5]{D78} or \cite[Corollary 41]{alb} for a
simpler proof).
In particular, $\mD^\perp$ is maximal, and so $d(\mC^\perp)=d(\mE^\perp)$.
Since $\mD$ and $\mE^\perp$ are MRD, we have 
$d(\mD)= k-(\alpha+1)+1$ and $d(\mE^\perp)=k-(k-\alpha)+1$.
Therefore $$d(\mC)+d(\mC^\perp)=d(\mD)+d(\mE^\perp)=k-(\alpha+1)+1+k-(k-\alpha)+1=k+1,$$
and the result easily follows from Lemma \ref{proprdqmrd}. 
\end{proof}

We can now compute the external distance of a dually QMRD code.

\begin{theorem} \label{extdually}
 Let $\mC \subseteq \F_q^{k \times m}$ be a dually QMRD code.
Then $\sigma^*(\mC)=d(\mC)$. 
\end{theorem}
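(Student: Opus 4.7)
I would prove the equality by establishing the two inequalities $\sigma^*(\mC) \le d(\mC)$ and $\sigma^*(\mC) \ge d(\mC)$ separately. The upper bound is quick: since $\mC$ is linear, $\sigma^*(\mC)$ equals the number of nonzero weights of $\mC^\perp$, as observed in the proof of Theorem \ref{ei}. By Lemma \ref{proprdqmrd}, $\mC^\perp$ is itself dually QMRD with $d(\mC^\perp) = k + 1 - d(\mC)$, so its nonzero weights all lie in $\{d(\mC^\perp), \ldots, k\}$, a set of cardinality $k - d(\mC^\perp) + 1 = d(\mC)$. Hence $\sigma^*(\mC) \le d(\mC)$.

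For the reverse inequality the plan is to use the determinacy part of Lemma \ref{proprdqmrd}: the full rank weight distribution of a dually QMRD code is a function of $k$, $m$ and its dimension alone. Thus it suffices to exhibit one dually QMRD code with the same parameters as $\mC^\perp$ whose weight enumerator is supported on the whole interval $\{d(\mC^\perp), \ldots, k\}$. I would take the witness produced by the construction appearing in the proof of Lemma \ref{exist} applied to $t := \dim(\mC^\perp)$: this yields a chain of linear codes $\mE' \subsetneq \mC' \subsetneq \mD'$ in which $\mE'$ and $\mD'$ are MRD of dimensions $m\alpha'$ and $m(\alpha'+1)$ for $\alpha' = \lfloor \dim(\mC^\perp)/m \rfloor$, while $\mC'$ is dually QMRD with $\dim(\mC') = \dim(\mC^\perp)$ and $d(\mC') = d(\mC^\perp)$.

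The weight analysis of $\mC'$ then splits into two pieces. For any $i \in \{d(\mC^\perp)+1,\ldots,k\}$, the inclusion $\mE' \subseteq \mC'$ gives $W_i(\mC') \ge W_i(\mE')$, and the classical closed-form expression for the rank distribution of an MRD code \cite{D78} guarantees $W_i(\mE') > 0$ throughout $\{d(\mE'), \ldots, k\}$; since $d(\mE') = d(\mC^\perp)+1$, this covers the required range. The remaining value $i = d(\mC^\perp)$ is achieved by $\mC'$ itself by definition of minimum distance. Together these give $d(\mC)$ distinct nonzero weights for $\mC'$, and hence also for $\mC^\perp$, whence $\sigma^*(\mC) \ge d(\mC)$. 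Combining the two bounds finishes the proof.

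The main obstacle I anticipate is purely bookkeeping around Lemma \ref{exist}: its formal statement only guarantees the outer containment $\mC' \subsetneq \mD'$, so to extract the MRD subcode $\mE'$ I would either cite the internals of its proof or redo the sandwich $\mE' \subsetneq \mC' \subsetneq \mD'$ inline. The only external ingredient is the positivity of the MRD rank distribution on its natural support $\{d,\ldots,k\}$, which is standard.
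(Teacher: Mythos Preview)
Your argument is correct, and the upper bound as well as the reduction via Lemma~\ref{proprdqmrd} match the paper exactly. The lower bound, however, is obtained differently. The paper applies Lemma~\ref{exist} with $t=\dim(\mC)$ to produce a dually QMRD code $\mC_1\subsetneq\mD$ with $\mD$ MRD and $d(\mC_1)=d(\mD)$, and then argues
\[
\sigma^*(\mC_1)\ \ge\ \rho(\mC_1)\ \ge\ d(\mD)\ =\ d(\mC_1),
\]
using the external distance bound (Theorem~\ref{ei}) for the first inequality and Lemma~\ref{rd}(\ref{p3}) for the second. You instead apply Lemma~\ref{exist} on the dual side with $t=\dim(\mC^\perp)$, extract the MRD subcode $\mE'\subsetneq\mC'$ from its proof, and invoke the classical positivity of the MRD rank distribution on $\{d(\mE'),\ldots,k\}$ to fill in the weights of $\mC'$ directly. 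Your route is more elementary in that it bypasses the covering radius altogether, at the cost of importing the explicit MRD weight formula from~\cite{D78}; the paper's route stays within the covering-radius framework it has just developed and needs no such external input. Your caveat about Lemma~\ref{exist} is accurate: the subcode $\mE'$ is in its proof, not its statement, so you would indeed have to cite the construction or reproduce the one-line sandwich argument inline.
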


\begin{proof}
Since $\mC$ is linear, as in the proof  of Corollary 
\ref{ei} we have 
$\sigma^*(\mC)=|\{i \in [k] : W_i(\mC^\perp)>0\}|$.
By Lemma \ref{exist} there exist a dually QMRD code $\mC_1$ and a linear MRD code $\mD$
such that
$\mC_1 \subsetneq \mD$, $\dim(\mC)=\dim(\mC_1)$ and $d(\mC_1)=d(\mD)$.
Since $\mC$ and $\mC_1$ have the same dimension and are both dually QMRD, by 
Lemma \ref{proprdqmrd} the dual codes $\mC^\perp$ and $\mC_1^\perp$  have the
same weight distribution.
In particular, $\sigma^*(\mC)=\sigma^*(\mC_1)$.
Therefore it suffices to prove the theorem for the code $\mC_1$.
By Lemma \ref{proprdqmrd} we have
$d(\mC_1^\perp)=k+1-d(\mC_1)$. This clearly implies
\begin{equation} \label{ee2}
 \sigma^*(\mC_1) \le k-(k+1-d(\mC_1))+1 = d(\mC_1).
\end{equation}
On the other hand, by Corollary \ref{ei} we have 
$\sigma^*(\mC_1) \ge \rho(\mC_1)$, and by 
Lemma \ref{rd} we have $\rho(\mC_1) \ge d(\mD)$. Therefore
\begin{equation} \label{ee3}
 \sigma^*(\mC_1) \ge \rho(\mC_1) \ge d(\mD) = d(\mC_1).
\end{equation}
The theorem can now be easily obtained combining 
 inequalities (\ref{ee2}) and (\ref{ee3}).
\end{proof}

\begin{corollary} \label{upperdqmrd}
 The covering radius of a dually QMRD code
 $\mC$ satisfies
 $\rho(\mC) \le d(\mC)$. Moreover,
 equality holds if and only if 
 $\mC$ is not maximal.
\end{corollary}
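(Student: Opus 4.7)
The plan is to combine the external distance bound with the characterization of maximality in terms of the covering radius, using the computation of $\sigma^*(\mC)$ for dually QMRD codes established in Theorem \ref{extdually}.

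First, I would establish the upper bound $\rho(\mC) \le d(\mC)$ by directly chaining two prior results: by Theorem \ref{extdually} the external distance of a dually QMRD code satisfies $\sigma^*(\mC) = d(\mC)$, and by the external distance bound of Theorem \ref{ei} we have $\rho(\mC) \le \sigma^*(\mC)$. The inequality is immediate.

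For the equivalence, I would argue both directions using Proposition \ref{max}. If $\mC$ is maximal, Proposition \ref{max} gives $\rho(\mC) \le d(\mC)-1 < d(\mC)$, so equality cannot hold. Conversely, if $\mC$ is not maximal, then by definition there exists a code $\mD \supsetneq \mC$ with $d(\mD) = d(\mC)$, and part \ref{p3} of Lemma \ref{rd} yields $\rho(\mC) \ge d(\mD) = d(\mC)$; combined with the upper bound just proved, this forces $\rho(\mC) = d(\mC)$.

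There is essentially no obstacle here: all the substantive work has been done in Theorem \ref{extdually} (computing the external distance of a dually QMRD code) and in the external distance bound Theorem \ref{ei}. The proof is a short assembly, and the only thing to be careful about is to cite part \ref{p3} of Lemma \ref{rd} correctly in the ``not maximal'' direction rather than trying to reprove it from scratch.
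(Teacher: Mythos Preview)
Your proof is correct and follows essentially the same route as the paper: the upper bound comes from chaining Theorem~\ref{ei} with Theorem~\ref{extdually}, exactly as you do. For the equality characterization the paper cites Proposition~\ref{murhod} (the formula $\mu(\mC)=d(\mC)-\min\{\rho(\mC),d(\mC)\}$) together with the fact that $\mu(\mC)=0$ iff $\mC$ is not maximal, whereas you bypass the maximality degree entirely and argue directly from Proposition~\ref{max} and Lemma~\ref{rd}~part~\ref{p3}. These are equivalent: Proposition~\ref{murhod} is itself proved from exactly those two ingredients, so your version simply unpacks the citation and is, if anything, slightly more self-contained.
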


\begin{proof}
 Combine Corollary \ref{ei}, Theorem \ref{extdually}, 
 Proposition \ref{murhod} and the fact that 
 $\mC$ is not maximal if and only if $\mu(\mC)=0$, by definition of 
 maximality degree.
\end{proof}

 The upper bound of Corollary \ref{upperdqmrd} is not sharp in general, as we
show in the following example.
This proves in particular that there exist dually QMRD codes
that are maximal. In particular, there exist dually QMRD codes
that are not contained into an MRD code with the same minimum distance.

\begin{example} \label{exdqmrd}
Take $q=2$ and $k=m=4$. Let $\mC$ be the linear code generated over $\F_2$ by the
following three matrices:

$$\begin{bmatrix}
   1 & 0 & 0 & 0 \\ 0 & 0 & 0 & 1 \\ 0 & 0 & 1 & 0 \\ 0 & 1 & 0 & 0
  \end{bmatrix}, \ \ \ \ \ 
\begin{bmatrix}
   0 & 1 & 0 & 0 \\ 1 & 0 & 1 & 1 \\ 0 & 0 & 0 & 1 \\ 1 & 1 & 0 & 0
  \end{bmatrix},  \ \ \ \ \ 
\begin{bmatrix}
   0 & 0 & 1 & 0 \\ 0 & 1 & 1 & 1 \\ 1 & 0 & 1 & 0 \\ 1 & 0 & 0 & 1
  \end{bmatrix}.
$$  
We have $\dim(\mC)=3$ and $d(\mC)=4$. Hence $\dim(\mC^\perp)=13$ and 
$d(\mC^\perp)=1$. Therefore $d(\mC)+d(\mC^\perp)=5$, and $\mC$ is dually
QMRD by Lemma \ref{proprdqmrd}. 
One can check that $\rho(\mC)=3 \neq d(\mC)=4$, and that $\mu(\mC)=1$.
\end{example}

\end{document}